\documentclass[reqno]{amsart}
\usepackage[vmargin=33truemm,hmargin=31truemm]{geometry}

\usepackage{xcolor}

\usepackage{listings}
\usepackage{mathtools}
\mathtoolsset{showonlyrefs, showmanualtags}
\usepackage{aligned-overset}

\usepackage{bm}
\usepackage{mleftright}
\usepackage{enumitem}

\theoremstyle{plain}

\newtheorem{theorem}{Theorem}[section]

\newtheorem{proposition}[theorem]{Proposition}
\newtheorem*{definition*}{Definition}
\numberwithin{equation}{section}

\theoremstyle{remark}

\newcommand*{\variabledot}{\makebox[1ex]{\textbf{$\cdot$}}}

\newcommand{\R}{\mathbb{R}}
\newcommand{\N}{\mathbb{N}}
\newcommand{\C}{\mathbb{C}}
\newcommand{\Z}{\mathbb{Z}}
\renewcommand{\S}{\mathbb{S}}

\newcommand{\indicator}[1]{\mathbf{1}_{#1}}

\newcommand{\intervaloo}[2]{(#1, #2)}
\newcommand{\intervalco}[2]{[#1, #2)}
\newcommand{\intervaloc}[2]{(#1, #2]}
\newcommand{\intervalcc}[2]{[#1, #2]}

\newcommand{\rad}{\textup{rad.}}

\newcommand{\Lrad}{L^2_{\rad}}
\newcommand{\Lperp}{L^2_{\perp}}

\newcommand{\sumn}{\sum_{n=1}^{\dim \HPk{k}(\S^{d-1})}}
\newcommand{\sumjk}{\sum_{\substack{(j, k) \in \N^2 \\ 2j + k = N} } }
\newcommand{\sumjkn}{\sum_{(j, k) \in \N^2} \!\!\!\! \sumn }

\newcommand{\abs}[1]{\lvert#1\rvert} 
\newcommand{\Abs}[1]{\mleft \lvert #1 \mright \rvert}
\newcommand{\norm}[1]{\lVert#1\rVert} 
 
\newcommand{\set}[2]{\{ \, #1 : #2 \, \} } 
\newcommand{\Set}[2]{ \mleft \{ \, #1 : #2 \, \mright\} } 
\newcommand{\conjugate}{\overline}

\newcommand{\Hamiltonian}{H}
\newcommand{\Hamiltonianfree}{\Hamiltonian_0}
\newcommand{\Laplacian}{\Delta}
\newcommand{\LaplacianR}{\Laplacian_{\R^d}}
\newcommand{\LaplacianS}{\Laplacian_{\S^{d-1}}}
\newcommand{\LambdaS}{\Lambda_{\S^{d-1}}}

\newcommand{\Tjnu}[2]{T_{#1}^{(#2)}}
\newcommand{\Tjnuf}[3]{T_{#1}^{(#2)}#3}

\DeclareMathOperator{\supp}{supp}

\newcommand{\const}{\bm{c}}
\newcommand{\Const}{\bm{C}}
\newcommand{\hoconst}{\const}
\newcommand{\hoConst}{\Const}
\newcommand{\hoconstwpd}[3]{\hoconst^{(#3)}(#1, #2)}
\newcommand{\hoConstwpd}[3]{\hoConst^{(#3)}(#1, #2)}
\newcommand{\hoconstwpdrad}[3]{\hoconst_{\rad}^{(#3)}(#1, #2)}
\newcommand{\hoConstwpdrad}[3]{\hoConst_{\rad}^{(#3)}(#1, #2)}
\newcommand{\hoconstwpdperp}[3]{\hoconst_{\perp}^{(#3)}(#1, #2)}
\newcommand{\hoConstwpdperp}[3]{\hoConst_{\perp}^{(#3)}(#1, #2)}

\newcommand{\BesselI}[1]{I_{#1}}

\newcommand{\hypergeometricpFq}[5]{ \,{}_{#1} F_{#2}\mleft( \begin{matrix}
		#3 \\
		#4
	\end{matrix} ; #5 \mright)  }
\newcommand{\hypergeometricU}[3]{U\mleft( \begin{matrix}
		#1 \\
		#2
	\end{matrix} ; #3 \mright)}
\newcommand{\hypergeometricpFqNoArgument}[2]{ \,{}_{#1} F_{#2} }
\newcommand{\hypergeometricUNoArgument}{U}

\newcommand{\WhittakerM}[2]{M_{#1, #2}}
\newcommand{\WhittakerW}[2]{W_{#1, #2}}

\DeclareMathOperator{\csch}{csch}

\newcommand{\hk}[1]{h_{#1}}
\newcommand{\Hk}[1]{H_{#1}}
\newcommand{\LaguerreL}[2]{L_{#1}^{(#2)}}
\newcommand{\LaguerrecalLjnu}[2]{\mathcal{L}_{#1}^{(#2)}}
\newcommand{\Quotient}[2]{Q_{#1}^{(#2)}}
\newcommand{\Ykn}[2]{Y_{#1}^{#2}}
\newcommand{\HPk}[1]{ \mathcal{H}_{#1} }

\newcommand{\gaussian}{\varphi}
\newcommand{\gaussiana}[1]{\gaussian_{#1}}
\newcommand{\weightphia}[1]{\phi_{#1}}

\newcommand{\Productmna}[3]{P(#1,#2;#3)}
\newcommand{\Fnas}[3]{F_{#1, #2}(#3)}
\newcommand{\fjkn}[4]{\widehat{#1}(#2, #3, #4)}

\usepackage[numbers,sort]{natbib}
\renewcommand*{\MR}[1]{\href{https://mathscinet.ams.org/mathscinet-getitem?mr=#1}{MR#1}}

\usepackage{hyperref}
\hypersetup{
	setpagesize=false,
	bookmarksnumbered=true,
	bookmarksopen=true,
	colorlinks=true,
	allcolors=blue,
}

\title[Optimal smoothing estimates for quantum harmonic oscillators]{Optimal constants of smoothing estimates for quantum harmonic oscillators}
\date{}

\author{Soichiro Suzuki}
\address[Soichiro Suzuki]{Department of Mathematics, Chuo University, 1-13-27, Kasuga, Bunkyo-ku, Tokyo, 112-8551, Japan}
\email{\href{mailto:soichiro.suzuki.m18020a@gmail.com}{soichiro.suzuki.m18020a@gmail.com}}

\subjclass[2020]{33C45, 35B65, 35Q41, 42B37, 42C10}

\keywords{Quantum harmonic oscillator, smoothing estimate, optimal constant, completely monotone function, Stieltjes function.}

\begin{document}
		\begin{abstract}
We study optimal constants and extremizers of smoothing estimates for quantum harmonic oscillators.
In particular, we establish harmonic oscillator analogues of free particle results due to \citeauthor{Sim1992} (\citeyear{Sim1992}), \citeauthor{BS2014} (\citeyear{BS2014}), and \citeauthor{BSS2015} (\citeyear{BSS2015}).
	\end{abstract}
	\maketitle
\section{Introduction}
We consider the Schr\"{o}dinger equation for the $d$-dimensional quantum harmonic oscillator
\begin{equation} \label{eq:Schrodinger}
	\begin{cases}
		i \partial_t u(x, t) = \Hamiltonian u(x, t) , & (x, t) \in \R^d \times \R , \\
		u(x, 0) = u_0(x) , & x \in \R^d , \, u_0 \in L^2(\R^d), 
	\end{cases}
\end{equation}
where $\Hamiltonian = \Hamiltonianfree + \abs{x}^2 = - \Laplacian + \abs{x}^2$. 
In 2011, \citet{Che2011} and \citet{BR2011} established the following inequalities, which are natural analogues of classical results for free particles (see \eqref{eq:type B free} and \eqref{eq:type C free}). 
\begin{theorem}[{\cite[Theorem 1]{Che2011}}] \label{thm:Chen}
	Let $d \geq 3$. Then there exists $C > 0$ such that
	\begin{equation} \label{eq:type B HO}
		\int_{(x, t) \in \R^d \times [0, 2 \pi]} \abs{x}^{-2} \abs{  e^{- i t \Hamiltonian} u_0(x) }^2 \, dx \, dt \leq C \norm{ u_0 }_{L^2(\R^d)}^2 
	\end{equation}
	holds for every $u_0 \in L^2(\R^d)$.
\end{theorem}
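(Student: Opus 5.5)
Our plan is to diagonalize $\Hamiltonian$ and use periodicity in $t$. The spectrum of $\Hamiltonian$ is $\{d + 2N : N \in \N\}$. Let $P_N$ be the orthogonal projection onto the eigenspace with eigenvalue $d+2N$, so that $u_0 = \sum_N P_N u_0$. Then
\[
e^{-it\Hamiltonian}u_0 = \sum_N e^{-it(d+2N)} P_N u_0 ,
\]
and the phases $e^{-2itN}$ are orthogonal on $[0,2\pi]$ (already on $[0,\pi]$). Applying Parseval in $t$ for each fixed $x$ and then Fubini gives
\[
\int_{\R^d}\int_0^{2\pi} \abs{x}^{-2}\abs{e^{-it\Hamiltonian}u_0(x)}^2\,dt\,dx = 2\pi \sum_N \int_{\R^d} \abs{x}^{-2}\abs{P_N u_0(x)}^2\,dx .
\]
Hence the inequality \eqref{eq:type B HO} reduces to the \emph{uniform} eigenfunction bound
\[
\int_{\R^d} \abs{x}^{-2}\abs{f(x)}^2\,dx \le C' \norm{f}_{L^2}^2 \qquad \text{for all } f \in \operatorname{ran} P_N,
\]
with $C'$ independent of $N$. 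Summing over $N$ and using $\sum_N\norm{P_Nu_0}^2=\norm{u_0}^2$ then yields the theorem with $C = 2\pi C'$.

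For the uniform bound, the first route is Hardy's inequality $\int \abs{x}^{-2}\abs{f}^2 \le \frac{4}{(d-2)^2}\int\abs{\nabla f}^2$, valid since $d\ge3$. Combined with $\int\abs{\nabla f}^2 \le \langle \Hamiltonian f,f\rangle = (d+2N)\norm{f}^2$, this only gives a bound growing like $N$, which is useless here. So we need a sharper argument that exploits the structure of the eigenspaces.

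We plan to decompose further in spherical harmonics. An eigenfunction with eigenvalue $d+2N$ is a sum over $2j+k=N$ of terms
\[
c\,\LaguerreL{j}{k+d/2-1}(\abs{x}^2)\,\abs{x}^k e^{-\abs{x}^2/2}\,\Ykn{k}{n}(x/\abs{x}).
\]
These pieces are mutually orthogonal both in $L^2$ and in $L^2(\abs{x}^{-2}dx)$, because the weight is radial and the $\Ykn{k}{n}$ are orthogonal on the sphere. So it suffices to bound the ratio
\[
\frac{\int_0^\infty r^{-2}\abs{\LaguerreL{j}{\alpha}(r^2)}^2 r^{2k}e^{-r^2}r^{d-1}\,dr}{\int_0^\infty \abs{\LaguerreL{j}{\alpha}(r^2)}^2 r^{2k}e^{-r^2}r^{d-1}\,dr}, \qquad \alpha = k+d/2-1,
\]
uniformly in $j,k$. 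Substituting $s=r^2$, this becomes
\[
\frac{\int_0^\infty s^{-1}(\LaguerreL{j}{\alpha})^2 s^{\alpha}e^{-s}\,ds}{\int_0^\infty (\LaguerreL{j}{\alpha})^2 s^{\alpha}e^{-s}\,ds}.
\]
We evaluate this with the classical identity $\int_0^\infty (\LaguerreL{j}{\alpha})^2 s^{\alpha-1}e^{-s}\,ds = \Gamma(j+\alpha+1)/(\alpha\, j!)$, which is obtained by expanding $\LaguerreL{j}{\alpha}=\sum_{i\le j}\LaguerreL{i}{\alpha-1}$ and using orthogonality for parameter $\alpha-1$. The denominator equals $\Gamma(j+\alpha+1)/j!$, so the ratio is exactly $1/\alpha = 1/(k+d/2-1) \le 2/(d-2)$, uniformly in $j$ and $k$. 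This gives $C' = 2/(d-2)$ and hence $C = 4\pi/(d-2)$.

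The main obstacle we expect is this weighted Laguerre integral identity, specifically getting the $1/\alpha$ uniformity right. The step where $d\ge3$ enters is the positivity $\alpha\ge d/2-1>0$ at $k=0$: for $d\le 2$ the integral diverges at $s=0$. This matches the failure of Hardy's inequality in those dimensions.
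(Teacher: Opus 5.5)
Your proposal is correct and follows essentially the paper's route. The paper proves this via Theorem~\ref{main thm:type B HO}: it applies Parseval in $t$ over the eigenvalues $2N+d$, then uses orthogonality of the spherical harmonics inside each eigenspace of the simultaneous Laguerre--spherical-harmonic decomposition, and finally evaluates $T_j^{(\alpha)}(r^{-2}) = 1/\alpha$, which gives the sharp constant $4\pi/(d-2)$. Your only deviation is that you derive $\int_0^\infty s^{\alpha-1} (L_j^{(\alpha)}(s))^2 e^{-s}\,ds = \Gamma(j+\alpha+1)/(\alpha\, j!)$ yourself, via $L_j^{(\alpha)} = \sum_{i \le j} L_i^{(\alpha-1)}$ and orthogonality at parameter $\alpha-1$, whereas the paper cites this integral from a table.
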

\begin{theorem}[{\cite[Theorem 3.3]{BR2011}}] \label{thm:BR}
	Let $d \geq 2$ and $p > 1$. Then there exists $C > 0$ such that
	\begin{equation} \label{eq:type C HO}
		\int_{(x, t) \in \R^d \times [0, 2 \pi]} (1 + \abs{x}^2)^{-p/2} \abs{ \Hamiltonian^{1/4} e^{- i t \Hamiltonian} u_0(x) }^2 \, dx \, dt \leq C \norm{ u_0 }_{L^2(\R^d)}^2 
	\end{equation}
	holds for every $u_0 \in L^2(\R^d)$.
\end{theorem}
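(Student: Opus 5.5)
We prove the estimate \eqref{eq:type C HO} of Theorem~\ref{thm:BR} by reducing it to a uniform bound on weighted moments of the Hermite (Laguerre) eigenfunctions. The spectrum of $\Hamiltonian$ is $\{2N+d : N\in\N\}$, so $e^{-it\Hamiltonian}$ is $2\pi$-periodic up to the unimodular factor $e^{-itd}$. Write $u_0=\sum_N P_N u_0$, where $P_N$ is the projection onto the eigenspace with eigenvalue $2N+d$. By Parseval in $t\in[0,2\pi]$,
\begin{equation}
\int_0^{2\pi}\abs{\Hamiltonian^{1/4}e^{-it\Hamiltonian}u_0(x)}^2\,dt = 2\pi\sum_N (2N+d)^{1/2}\abs{P_N u_0(x)}^2 .
\end{equation}
Integrating in $x$ against $w(x)=(1+\abs{x}^2)^{-p/2}$, it therefore suffices to prove the uniform operator bound
\begin{equation}
\sup_N\, (2N+d)^{1/2}\,\norm{P_N w P_N}_{L^2\to L^2}<\infty .
\end{equation}

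Since $w$ is radial, we decompose each eigenspace into spherical harmonics. The eigenfunctions are $\Laguerre$-type functions $f_{j,k}(r)\Ykn{k}{n}(\omega)$ with $2j+k=N$, where $f_{j,k}(r)= c\, r^k \LaguerreL{j}{k+d/2-1}(r^2)e^{-r^2/2}$. A radial weight preserves each pair $(k,n)$, and within a fixed $(k,n)$ the index $j=(N-k)/2$ is determined by $N$. Hence $P_N w P_N$ is diagonal in this basis, and its eigenvalues are
\begin{equation}
m_{j,k}=\int_0^\infty w(r)\,\abs{f_{j,k}(r)}^2 r^{d-1}\,dr ,
\end{equation}
with $f_{j,k}$ normalized in $L^2(r^{d-1}dr)$. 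The problem thus reduces to showing $m_{j,k}\lesssim (2j+k+d)^{-1/2}$ uniformly in $j,k$.

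This bound is the main obstacle, and we would attack it through the classical pointwise behaviour of Laguerre functions. Semiclassically, $\abs{f_{j,k}}^2r^{d-1}$ is concentrated on the classically allowed annulus $r_-<r<r_+$, where $r_\pm^2=\lambda\pm\sqrt{\lambda^2-\nu^2}$ with $\lambda\approx 2N+d$ and $\nu\approx k+d/2-1$. On that annulus it behaves like the classical density
\begin{equation}
\frac{2r}{\pi\sqrt{(r^2-r_-^2)(r_+^2-r^2)}} .
\end{equation}
We would treat two regimes.

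First, when $\nu\ll\lambda$, the inner turning point satisfies $r_-\sim\nu/\sqrt\lambda$. The weighted integral is then roughly
\begin{equation}
\int_{r_-}^{r_+}\frac{(1+r)^{-p}}{\sqrt\lambda}\,dr\lesssim \lambda^{-1/2},
\end{equation}
and this is exactly where $p>1$ is needed for convergence at large $r$.

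Second, when $\nu\gtrsim\lambda$, the annulus sits at $r\gtrsim\sqrt\lambda$. There the weight is at most $\lambda^{-p/2}\le\lambda^{-1/2}$, which already gives the bound.

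To make these heuristics rigorous, we would use the uniform bounds of Muckenhoupt/Askey--Wainger type for $\mathcal L_j^{(\alpha)}$, uniform in $\alpha\ge0$. These bounds control the oscillatory region by the classical density and give exponential decay past the turning points. The contribution of the Airy transition regions near $r_\pm$ is of lower order. For $d\ge2$ all relevant $\alpha=k+d/2-1\ge0$, so the uniform estimates apply. Combining the two regimes gives $m_{j,k}\lesssim\lambda^{-1/2}$ uniformly, which proves the theorem.
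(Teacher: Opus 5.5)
Your reduction is correct, and it is exactly the paper's Theorem~\ref{thm:optimal constant}. Parseval in $t$, plus the fact that a radial weight is diagonal in the Laguerre--spherical-harmonic basis, gives the identity \eqref{eq:LHS}. So the statement is equivalent to $\sup_{j,k}(4j+2k+d)^{1/2}\Tjnuf{j}{k+d/2-1}{w}<\infty$. Your semiclassical picture is also sound: its leading term reproduces the limit $\frac{2}{\pi}\int_0^\infty w$ of Proposition~\ref{prop:asymptotic behavior}. One small slip: the turning points are $r_\pm^2=\lambda\pm\sqrt{\lambda^2-\nu^2}$ with $\lambda=(2N+d)/2$, not $2N+d$.

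The gap is that this uniform bound is the whole content of the theorem once the reduction is made, and you only argue it heuristically. The tools you name do not deliver it. The Askey--Wainger and Muckenhoupt bounds are fixed-order estimates: their constants depend on $\alpha$. Their shape is also adapted to bounded $\alpha$, with the oscillatory region reaching down to $r^2\sim 1/\lambda$ and no inner turning point. Here, however, $k$ runs over all $k\le N$, so $\alpha=k+d/2-1$ is unbounded together with $j$. You would need envelopes uniform in both $j$ and $\alpha$, valid across:
\begin{itemize}
\item the inner turning point $r_-^2\approx\alpha^2/(2\lambda)$;
\item both Airy layers;
\item the regime $\alpha\sim\lambda$, where the whole annulus is a transition layer (e.g.\ $j=0$, where $\LaguerrecalLjnu{0}{\alpha}(r)^2\propto e^{-r^2}r^{2\alpha+1}$).
\end{itemize}
Such bounds are considerably more delicate, and your sketch neither proves nor correctly cites them. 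As written, the key estimate is assumed rather than proved.

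The paper quotes this statement from Bongioanni--Rogers and recovers it, with the sharp constant, for $d\ge 3$. It closes this step without any pointwise asymptotics:
\begin{itemize}
\item For $p\in\intervaloc{1}{2}$, the map $r\mapsto(1+r)^{-p/2}$ is Stieltjes, so $w=\int\weightphia{a}\,d\tau(a)$ for a Borel measure $\tau$, where $\weightphia{a}(r)=(a+r^2)^{-1}$.
\item For each $a$, Proposition~\ref{prop:explicit formula} gives the exact value $\Tjnuf{j}{\nu}{\weightphia{a}}=\Productmna{2j+\nu+1}{\nu}{a}$.
\item The integral representation $(2\mu a)^{1/2}\Productmna{\mu}{\nu}{a}=\pi^{-1/2}\int_0^\infty s^{-1/2}\Fnas{\nu}{a}{s/\mu}e^{-s}\,ds$, with $0<\Fnas{\nu}{a}{s}<1$ for $\nu\ge1/2$, yields $(4j+2\nu+2)^{1/2}\Tjnuf{j}{\nu}{\weightphia{a}}<a^{-1/2}$ uniformly in $j$ and $\nu\ge 1/2$.
\item Integrating in $\tau$ gives the bound $\frac{2}{\pi}\norm{w}_{L^1(\intervaloo{0}{\infty})}$.
\item The case $p>2$ follows from $(1+r^2)^{-p/2}\le(1+r^2)^{-1}$.
\end{itemize}

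If you want to keep your semiclassical route, the missing idea is Proposition~\ref{prop:products of Whittaker functions}\ref{item:decreasing with respect to nu}. On a fixed level $N$ one has $2j+\nu+1=N+d/2$, so $\Tjnuf{j}{k+d/2-1}{\weightphia{a}}=\Productmna{N+d/2}{k+d/2-1}{a}$ decreases in $k$. Hence, for Stieltjes weights (and then by domination for all $p>1$), the supremum over $k$ on each level is attained at $k\in\{0,1\}$. This reduces everything to the two fixed orders $\alpha\in\{d/2-1,d/2\}$. There the fixed-order Askey--Wainger bounds genuinely suffice, including for $d=2$.
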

Our aim in this paper is to determine optimal constants and extremizers of these inequalities.
Here, we say that $u_0 \in L^2(\R^d) \setminus \{0\}$ is an extremizer when it attains the optimal constant.
First, we give our result for the inequality \eqref{eq:type B HO}.
\begin{theorem} \label{main thm:type B HO}
	Let $d \geq 3$. Then the optimal constant of the inequality \eqref{eq:type B HO} is $4\pi / (d-2)$, and $u_0 \in L^2(\R^d) \setminus \{0\}$ is an extremizer if and only if it is radially symmetric.
\end{theorem}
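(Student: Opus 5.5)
The plan is to diagonalize $\Hamiltonian$ and reduce the estimate to a weighted $\ell^2$ inequality on the eigenfunction coefficients. Use the standard orthonormal basis of $L^2(\R^d)$ of eigenfunctions of $\Hamiltonian$ in polar coordinates:
\[
\Phi_{j,k,n}(x) = c_{j,k}\, \abs{x}^k \LaguerreL{j}{\nu}(\abs{x}^2) e^{-\abs{x}^2/2}\, \Ykn{k}{n}(x/\abs{x}), \qquad \nu = k + \tfrac{d}{2} - 1 .
\]
Here $j,k \in \N$, $\{\Ykn{k}{n}\}_n$ is an orthonormal basis of $\HPk{k}(\S^{d-1})$, and $c_{j,k}$ is the $L^2$ normalization. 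The eigenvalue is $\Hamiltonian \Phi_{j,k,n} = (4j+2k+d)\Phi_{j,k,n}$. Writing $u_0 = \sumjkn \fjkn{u_0}{j}{k}{n} \Phi_{j,k,n}$, we get $e^{-it\Hamiltonian}u_0 = e^{-itd}\sum_{N} e^{-2itN} P_N u_0$, where $P_N$ is the spectral projection onto $\{2j+k=N\}$.

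\textbf{Step 1 (time integration).} The frequencies $2N$ are distinct even integers, so $\{e^{-2itN}\}_N$ is orthogonal on $[0,2\pi]$. For fixed $x$, Parseval in $t$ gives
\[
\int_0^{2\pi} \abs{e^{-it\Hamiltonian}u_0(x)}^2\,dt = 2\pi \sum_N \abs{P_N u_0(x)}^2 .
\]
Integrating against $\abs{x}^{-2}$ (Tonelli) turns the left side of \eqref{eq:type B HO} into $2\pi\sum_N \int \abs{x}^{-2}\abs{P_N u_0}^2\,dx$. To justify this, first take $u_0$ a finite linear combination of the $\Phi_{j,k,n}$, then pass to general $u_0$ by monotone convergence or density once the final identity is established.

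\textbf{Step 2 (spatial integration).} Within a fixed $N$, the pair $(j,k)$ is determined by $k$ since $j=(N-k)/2$. The weight $\abs{x}^{-2}$ is radial, so orthogonality of the spherical harmonics on each sphere kills all cross terms between different $(k,n)$. We are left with
\[
\int \abs{x}^{-2}\abs{\Phi_{j,k,n}}^2\,dx .
\]
Substituting $s=\abs{x}^2$ turns $r^{d-1}\,dr$ with integrand $r^{2k}$ into $\tfrac12 s^{\nu}\,ds$, and the extra factor $r^{-2}$ gives $s^{\nu-1}$. The required input is the classical Laguerre identities
\[
\int_0^\infty s^{\nu} e^{-s}\LaguerreL{j}{\nu}(s)^2\,ds = \frac{\Gamma(j+\nu+1)}{j!}, \qquad \int_0^\infty s^{\nu-1} e^{-s}\LaguerreL{j}{\nu}(s)^2\,ds = \frac{\Gamma(j+\nu+1)}{j!\,\nu}, \qquad \nu>0 .
\]
The second follows by expanding $\LaguerreL{j}{\nu}=\sum_{i\le j}\LaguerreL{i}{\nu-1}$ and using orthogonality for parameter $\nu-1$. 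These identities give $\int \abs{x}^{-2}\abs{\Phi_{j,k,n}}^2 = 1/\nu = 1/(k+d/2-1)$. Consequently
\[
\int_{\R^d\times[0,2\pi]} \abs{x}^{-2}\abs{e^{-it\Hamiltonian}u_0}^2 = 2\pi \sumjkn \frac{\abs{\fjkn{u_0}{j}{k}{n}}^2}{k+\frac d2-1}.
\]

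\textbf{Step 3 (conclusion).} Since $d\ge 3$, the factor $1/(k+d/2-1)$ is strictly decreasing in $k$, with maximum $2/(d-2)$ at $k=0$. This yields the bound $\frac{4\pi}{d-2}\norm{u_0}^2$ by Plancherel. Equality holds iff $\fjkn{u_0}{j}{k}{n}=0$ for all $k\ge1$, i.e.\ iff $u_0$ lies in the closed span of the $k=0$ functions, which is exactly the radial subspace of $L^2(\R^d)$. This gives both the optimal constant and the characterization of extremizers.

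The main obstacle I expect is the bookkeeping in Steps 1–2: verifying that no cross terms survive between different $j$ with the same $k$ (they have different $N$, hence are separated by the time integral) and justifying the interchange of sums and integrals for general $L^2$ data. The Laguerre weight-$s^{\nu-1}$ identity is the one genuine computation, and I would handle it by the expansion into $\LaguerreL{i}{\nu-1}$ noted in Step 2.
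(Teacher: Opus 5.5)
Your proposal is correct and follows essentially the same route as the paper: the same Laguerre--spherical-harmonic eigenbasis, time orthogonality over $[0,2\pi]$ together with orthogonality of spherical harmonics to diagonalize the form as $2\pi\sum \abs{\fjkn{u_0}{j}{k}{n}}^2/(k+d/2-1)$, and then maximizing at $k=0$, which forces the radial extremizers. The only difference is that you derive $\int_0^\infty s^{\nu-1}e^{-s}\LaguerreL{j}{\nu}(s)^2\,ds=\Gamma(j+\nu+1)/(j!\,\nu)$ from $\LaguerreL{j}{\nu}=\sum_{i\le j}\LaguerreL{i}{\nu-1}$, where the resulting sum $\sum_{i\le j}\Gamma(i+\nu)/i!$ gives that value by a one-line induction, whereas the paper cites this integral from a table.
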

Theorem \ref{main thm:type B HO} is inspired by a similar result for free particles. 
It is classical and well known that 
\begin{equation} \label{eq:type B free}
	\int_{(x, t) \in \R^d \times \R} \abs{x}^{-2} \abs{ e^{- i t \Hamiltonianfree} u_0(x) }^2 \, dx \, dt \leq C \norm{ u_0 }_{L^2(\R^d)}^2 
\end{equation}
holds for every $u_0 \in L^2(\R^d)$ whenever $d \geq 3$, as proved by \citet[Theorem 1]{KY1989}. 
\citet[(3)]{Sim1992} showed that its optimal constant is $\pi / (d-2)$, and \citet[Theorem 1.5]{BS2017} pointed out that $u_0 \in L^2(\R^d) \setminus \{0\}$ is an extremizer if and only if it is radially symmetric.
It is also known that this inequality can be improved by using the spherical Laplacian $\LaplacianS$.
\citet[Theorem 1.2]{Hos1997} showed that 
\begin{equation} \label{eq:type B with spherical Laplacian free}
	\int_{(x, t) \in \R^d \times \R} \abs{x}^{-2} \abs{ (-\LaplacianS + 1)^{1/4} e^{- i t \Hamiltonianfree} u_0(x) }^2 \, dx \, dt \leq C \norm{ u_0 }_{L^2(\R^d)}^2 
\end{equation}
holds, and \citet[Theorem 1.10]{FW2011} established the reversed inequality
\begin{equation} \label{eq:type B with spherical Laplacian reversed free}
	\int_{(x, t) \in \R^d \times \R} \abs{x}^{-2} \abs{ (-\LaplacianS + 1)^{1/4} e^{- i t \Hamiltonianfree} u_0(x) }^2 \, dx \, dt \geq c \norm{ u_0 }_{L^2(\R^d)}^2 .
\end{equation}
Furthermore, \citet{BS2014} studied the optimal constants of these inequalities and found that the identity
\begin{equation} \label{eq:type B identity free d=4}
	\int_{(x, t) \in \R^4 \times \R} \abs{x}^{-2} \abs{ (-\LaplacianS + 1)^{1/4} e^{- i t \Hamiltonianfree} u_0(x) }^2 \, dx \, dt = \frac{\pi}{2} \norm{ u_0 }_{L^2(\R^4)}^2
\end{equation}
holds when $d=4$ (\cite[Theorem 1.2]{BS2014}). 
More generally, following the proof of \cite[Theorem 1.2]{BS2014}, actually it is easy to see that
\begin{equation} \label{eq:type B identity free}
	\int_{(x, t) \in \R^d \times \R} \abs{x}^{-2} \abs{ (-\LaplacianS + (d/2 - 1)^2)^{1/4} e^{- i t \Hamiltonianfree} u_0(x) }^2 \, dx \, dt = \frac{\pi}{2} \norm{ u_0 }_{L^2(\R^d)}^2 
\end{equation}
holds whenever $d \geq 3$.
Inspired by this, we show the following.
\begin{theorem} \label{main thm:type B identity HO}
	Let $d \geq 3$. Then we have
		\begin{equation} \label{eq:type B identity HO}
			\int_{(x, t) \in \R^d \times \intervalcc{0}{2\pi}} \abs{x}^{-2} \abs{ (-\LaplacianS + (d/2 - 1)^2)^{1/4} e^{- i t \Hamiltonian} u_0(x) }^2 \, dx \, dt = 2\pi \norm{u_0}_{L^2(\R^d)}^2
		\end{equation}
		for every $u_0 \in L^2(\R^d)$.
\end{theorem}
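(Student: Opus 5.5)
Expand $u_0$ in the Laguerre–spherical harmonic eigenbasis of $\Hamiltonian$. For $k\in\N$ let $\{\Ykn{k}{n}\}_n$ be an orthonormal basis of $\HPk{k}(\S^{d-1})$ and set $\nu=\nu(k)=k+d/2-1$. The functions
\[
\Phi_{j,k,n}(r\omega)=c_{j,k}\, r^{k}\LaguerreL{j}{\nu}(r^2)e^{-r^2/2}\,\Ykn{k}{n}(\omega)
\]
form an orthonormal basis of $L^2(\R^d)$, with $\Hamiltonian\Phi_{j,k,n}=(4j+2k+d)\Phi_{j,k,n}$. Since $-\LaplacianS$ acts on $\HPk{k}$ by $k(k+d-2)$, we have $k(k+d-2)+(d/2-1)^2=\nu^2$. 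Thus $(-\LaplacianS+(d/2-1)^2)^{1/4}$ commutes with $e^{-it\Hamiltonian}$ and multiplies the $(j,k,n)$ component by $\nu^{1/2}$. Writing $u_0=\sum \fjkn{u_0}{j}{k}{n}\Phi_{j,k,n}$, the left side of \eqref{eq:type B identity HO} equals
\[
\int_0^{2\pi}\!\!\int_0^\infty r^{d-3}\sum_{k,n}\nu(k)\Bigl|\sum_j \fjkn{u_0}{j}{k}{n}\,e^{-it(4j+2k+d)}R_{j,k}(r)\Bigr|^2 dr\,dt,
\]
where $R_{j,k}$ is the radial profile. This uses orthogonality of the $\Ykn{k}{n}$ on the sphere.

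Next integrate in $t$ over $[0,2\pi]$. For fixed $k$ the frequencies $4j+2k+d$ are distinct integers, so the exponentials $e^{-4ijt}$ are orthogonal on $[0,2\pi]$ and the cross terms in $j$ drop out. This reduces the claim to the scalar identity
\begin{equation}
\nu\int_0^\infty r^{-2}\,|R_{j,k}(r)|^2\, r^{d-1}\,dr=1 \qquad\text{for all } j,k,
\end{equation}
where $R_{j,k}$ is normalized by $\int_0^\infty |R_{j,k}|^2r^{d-1}dr=1$. Granting it, the left side becomes $2\pi\sum_{j,k,n}|\fjkn{u_0}{j}{k}{n}|^2=2\pi\norm{u_0}_{L^2(\R^d)}^2$. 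To pass from finite expansions to general $u_0$, use monotone convergence (Fatou plus the identity on truncations, with the sum in $j$ being an $L^2(dt)$ sum for a.e.\ $r$).

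The main step is the scalar identity. Substitute $s=r^2$, so that $r^{d-1}dr=\tfrac12 s^{d/2-1}ds$ and $r^{2k}=s^k$. The normalization becomes $\tfrac12 c^2\int_0^\infty s^{\nu}\bigl(\LaguerreL{j}{\nu}(s)\bigr)^2e^{-s}ds=1$. The weighted integral becomes $\tfrac12 c^2\int_0^\infty s^{\nu-1}\bigl(\LaguerreL{j}{\nu}(s)\bigr)^2e^{-s}ds$. We then need the classical formulas
\[
\int_0^\infty s^{\nu}\bigl(\LaguerreL{j}{\nu}\bigr)^2e^{-s}ds=\frac{\Gamma(j+\nu+1)}{j!},\qquad
\int_0^\infty s^{\nu-1}\bigl(\LaguerreL{j}{\nu}\bigr)^2e^{-s}ds=\frac{\Gamma(j+\nu+1)}{j!\,\nu}.
\]
The second holds for $\nu>0$, which is where $d\ge3$ enters, since it guarantees $\nu\ge 1/2$. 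It can be proved by writing $\LaguerreL{j}{\nu}=\sum_{i\le j}\LaguerreL{i}{\nu-1}$ and using orthogonality of the $\LaguerreL{i}{\nu-1}$ against weight $s^{\nu-1}e^{-s}$. This gives $\sum_{i\le j}\Gamma(i+\nu)/i!=\Gamma(j+\nu+1)/(j!\,\nu)$ by the standard telescoping identity. Their ratio is exactly $1/\nu$, which yields the scalar identity and hence the theorem.
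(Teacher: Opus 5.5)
Your proposal is correct and is essentially the paper's proof. You expand in the Laguerre--spherical-harmonic eigenbasis, use orthogonality on $\S^{d-1}$ and in $t\in[0,2\pi]$ to remove cross terms, and reduce to $\Tjnuf{j}{\nu}{w}=1/\nu$ for $w(r)=r^{-2}$, $\nu=k+d/2-1$. The only differences are that the paper quotes this Laguerre integral from a table (Prudnikov--Brychkov--Marichev 2.19.14.17) and goes through its general optimal-constant formula, whereas you prove the integral directly via $\LaguerreL{j}{\nu}=\sum_{i\le j}\LaguerreL{i}{\nu-1}$, orthogonality, and the telescoping sum, which makes the argument self-contained.
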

We note that the inequalities \eqref{eq:type B HO}, \eqref{eq:type B free} and identities \eqref{eq:type B identity free}, \eqref{eq:type B identity HO} fail when $d=2$. 
On the other hand, they are still valid if we restrict initial data to a certain subspace of $L^2(\R^2)$. See Section \ref{section:radial initial data} for details.

Now we introduce completely monotone functions and Stieltjes functions in order to state our results on the inequality \eqref{eq:type C HO}.
	\begin{definition*}[completely monotone function]
	A function $f \colon \intervaloo{0}{\infty} \to \R$ is said to be a completely monotone function if one of the following equivalent conditions is satisfied:
	\begin{enumerate}[label=\textup{(CM\arabic*)}]
		\item \label{item:completely monotone 1}
		It is infinitely differentiable and
		satisfies
		\begin{equation}
			(-1)^n f^{(n)}(r) \geq 0
		\end{equation}
		for every $n \in \N \coloneqq \{ 0, 1, 2, \ldots \}$ and $r \in \intervaloo{0}{\infty}$, where $f^{(n)}$ denotes the $n$-th derivative of $f$.
		\item \label{item:completely monotone 2}
		There exists a Borel measure $\lambda$ on $\intervalco{0}{\infty}$ such that
		\begin{equation}
			f(r) = \int_{\rho \in \intervalco{0}{\infty}} \exp(-r \rho) \, d\lambda(\rho)
		\end{equation}
		holds for every $r \in \intervaloo{0}{\infty}$.
	\end{enumerate} 
\end{definition*}
\begin{definition*}[Stieltjes function]
	A function $f \colon \intervaloo{0}{\infty} \to \R$ is said to be a Stieltjes function if one of the following equivalent conditions is satisfied:
	\begin{enumerate}[label=\textup{(S\arabic*)}]
		\item \label{item:Stieltjes 1}
		There exists a Borel measure $\lambda$ on $\intervalco{0}{\infty}$ such that
		\begin{equation}
			f(r) = \int_{a \in \intervalco{0}{\infty}} \frac{1}{a+r} \, d\lambda(a)
		\end{equation}
		holds for every $r \in \intervaloo{0}{\infty}$.
		\item \label{item:Stieltjes 2}
		There exists a completely monotone function $\varphi$ such that
		\begin{equation}
			f(r) = \int_{\rho \in \intervaloo{0}{\infty}} \exp(-r \rho) \varphi(\rho) \, d\rho
		\end{equation}
		holds for every $r \in \intervaloo{0}{\infty}$.
	\end{enumerate}
\end{definition*}
The equivalence $\ref{item:completely monotone 1} \iff \ref{item:completely monotone 2}$ is known as the \citeauthor{Ber1929}--\citeauthor{Wid1931} theorem (see \cite[\textsection 14]{Ber1929} and \cite[Theorem 8]{Wid1931}), and $\ref{item:Stieltjes 1} \iff \ref{item:Stieltjes 2}$ is an immediate consequence of
\begin{equation}
\frac{1}{a+r} = \int_{\rho \in \intervaloo{0}{\infty}} \exp(- (a+r) \rho)  \, d\rho .
\end{equation}
Observe that Stieltjes functions are completely monotone but not conversely.
To illustrate, the function
\begin{equation}
\intervaloo{0}{\infty} \ni r \longmapsto (1 + r)^{-p/2}
\end{equation}
is a Stieltjes function for $p \in \intervaloc{0}{2}$ but not for $p \in \intervaloo{2}{\infty}$, while it is completely monotone for every $p \in \intervaloo{0}{\infty}$.
This follows from the integral representation
\begin{equation}
(1 + r)^{-p/2} = \frac{1}{\Gamma(p/2)} \int_{\rho \in \intervaloo{0}{\infty}} \exp(-r \rho) \rho^{p/2-1} \exp(-\rho) \, d\rho
\end{equation}
and the fact that the function
\begin{equation}
\intervaloo{0}{\infty} \ni \rho \longmapsto \rho^{p/2-1} \exp(-\rho)
\end{equation}
is completely monotone for $p \in \intervaloc{0}{2}$ but not for $p \in \intervaloo{2}{\infty}$.
For further results and related topics on completely monotone and Stieltjes functions, see \citet{Wid1942}, \citet*{SSV2012}, and the references therein, for example.

Now we give our results on the inequality \eqref{eq:type C HO}.
\begin{theorem} \label{main thm:type C HO general}
	Suppose that $w \in L^1(\intervaloo{0}{\infty}) \setminus \{0\}$ is such that
	\begin{equation}
	r \longmapsto w(r^{1/2})
	\end{equation}
	is a Stieltjes function.
	Then, for every $d \geq 3$, the optimal constant of the inequality
	\begin{equation} \label{eq:type C HO general}
		\int_{(x, t) \in \R^d \times \intervalcc{0}{2\pi}} w(\abs{x}) \abs{ \Hamiltonian^{1/4} e^{- i t \Hamiltonian} u_0(x) }^2 \, dx \, dt \leq C \norm{ u_0 }_{L^2(\R^d)}^2 
	\end{equation}
	is $4 \norm{w}_{L^1(\intervaloo{0}{\infty})}$, and there are no extremizers.
	In particular, the optimal constant of the inequality \eqref{eq:type C HO} is
	\begin{equation}
		4 \int_{r \in \intervaloo{0}{\infty}} (1 + r^2)^{-p/2} \, dr = \frac{2 \pi^{1/2} \Gamma( (p-1)/2 ) }{ \Gamma(p/2) } 
	\end{equation}
	when $d \geq 3$ and $p \in \intervaloc{1}{2}$.
\end{theorem}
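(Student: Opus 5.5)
Because $e^{-itH}$ is $2\pi$-periodic up to a phase, I will diagonalize in the Hermite basis. Expand $u_0=\sum_{j,k,n}\widehat{u_0}(j,k,n)\,\Phi_{j,k,n}$, where $\Phi_{j,k,n}(x)=\LaguerrecalLjnu{j}{k+d/2-1}(\abs{x})\,\Ykn{k}{n}(x/\abs{x})$ is the Laguerre--spherical harmonic eigenbasis with $H\Phi_{j,k,n}=(2N+d)\Phi_{j,k,n}$ and $N=2j+k$. Integrating in $t$ over $[0,2\pi]$ kills all cross terms between different eigenvalues $N\neq N'$. Orthogonality of spherical harmonics then kills cross terms between different $(k,n)$, so only pairs $(j,k,n),(j',k,n)$ with $2j+k=2j'+k$ survive, that is $j=j'$. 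The left side of \eqref{eq:type C HO general} therefore equals
\begin{equation}
2\pi\sum_{j,k,n}(4j+2k+d)^{1/2}\,\abs{\widehat{u_0}(j,k,n)}^2\int_0^\infty w(r)\,\abs{\LaguerrecalLjnu{j}{k+d/2-1}(r)}^2 r^{d-1}\,dr .
\end{equation}
So the operator is diagonal, and the optimal constant is the supremum over $(j,k)$ of the multiplier $m(j,k)=2\pi(4j+2k+d)^{1/2}\int w\,\abs{\mathcal L}^2r^{d-1}dr$. An extremizer exists iff this supremum is attained. The theorem thus reduces to showing $m(j,k)<4\norm{w}_{L^1}$ for all $(j,k)$, and $m(j,k)\to4\norm{w}_{L^1}$ along some sequence.

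Next I use the Stieltjes hypothesis \ref{item:Stieltjes 1}: $w(r)=\int_{[0,\infty)}(a+r^2)^{-1}\,d\lambda(a)$. Both $m$ and $\norm{w}_{L^1}$ are linear in $w$, and $\norm{(a+r^2)^{-1}}_{L^1(dr)}=\pi/(2\sqrt a)$; integrability of $w$ forces $\lambda(\{0\})=0$ and $\int a^{-1/2}d\lambda<\infty$. It therefore suffices to prove, for each $a>0$ and each $(j,k)$,
\begin{equation}
(4j+2k+d)^{1/2}\int_0^\infty \frac{\abs{\LaguerrecalLjnu{j}{\nu}(r)}^2 r^{d-1}}{a+r^2}\,dr<\frac{1}{\sqrt a},\qquad \nu=k+d/2-1,
\end{equation}
with asymptotic equality as $N\to\infty$ for fixed $a$. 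Strictness for each $a$ then gives strictness after integrating against $\lambda\ne0$, which excludes extremizers. For the limit I would use dominated or monotone convergence in $a$, provided the ratio is bounded by $1/\sqrt a$ uniformly.

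For the single-kernel inequality I write $(a+r^2)^{-1}=\int_0^\infty e^{-\rho(a+r^2)}d\rho$. Substituting $s=r^2$, the Gaussian moment $\int e^{-\rho s}\abs{L^{(\nu)}_j(s)}^2 s^\nu e^{-s}ds$ has a closed form via the generating function or Mehler's formula. It is a Jacobi/hypergeometric expression in $1/(1+\rho)$, of the shape $(1+\rho)^{-\nu-1}\,{}_2F_1$ evaluated at $\rho^2/(1+\rho)^2$. Integrating against $e^{-a\rho}$ produces an explicit function $F_{j,\nu}(a)$. The target inequality becomes $\sqrt{a}\,(4j+2\nu+2)^{1/2}F_{j,\nu}(a)<1$.

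This step is the main obstacle. I would first try to show that $\sqrt{N}\,\int e^{-\rho(a+\cdot)}\cdots$, viewed as a function of $\rho$, is dominated by the corresponding quantity for the limiting kernel $(\pi\rho)^{-1/2}$, since $\int_0^\infty e^{-a\rho}(\pi\rho)^{-1/2}d\rho=a^{-1/2}$. Concretely, I would aim for a pointwise bound $\int e^{-\rho r^2}\abs{\mathcal L}^2r^{d-1}dr<(\pi\rho(4j+2k+d))^{-1/2}$, perhaps using semiclassical (WKB/Plancherel–Rotach) density heuristics or a monotonicity-in-$j$ argument via a three-term recurrence. The asymptotic sharpness as $j\to\infty$ (or $k\to\infty$) should follow because the radial density $\abs{\mathcal L}^2r^{d-1}$ approaches the classical density $\frac{2}{\pi}(R^2-r^2)^{-1/2}$-type profile, with $R^2\sim4N$. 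Pairing this with $w$ concentrated near $0$ gives $(4N)^{1/2}\cdot\frac{2}{\pi\sqrt{4N}}\int w\to\frac{2}{\pi}\norm{w}_1$, and $2\pi\cdot\frac{2}{\pi}=4$.

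The final formula then comes from the Beta integral $\int_0^\infty(1+r^2)^{-p/2}dr=\frac{\sqrt\pi\,\Gamma((p-1)/2)}{2\Gamma(p/2)}$. The hypothesis holds there because $r\mapsto(1+r)^{-p/2}$ is Stieltjes exactly for $p\le2$, and integrability requires $p>1$.
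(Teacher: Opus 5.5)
\paragraph{Verdict: genuine gap.} Your reductions are correct and match the paper. The $t$-integration and orthogonality of spherical harmonics diagonalize the form (Theorem \ref{thm:optimal constant}). The Stieltjes representation then reduces the theorem to the single-kernel estimate $(4j+2\nu+2)^{1/2}\Tjnuf{j}{\nu}{\weightphia{a}}<a^{-1/2}$, with $\nu=k+d/2-1$, together with equality in the limit $j\to\infty$. That estimate, however, is the entire content of the theorem, and the one concrete route you propose for it is false.

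\paragraph{The proposed pointwise bound is false.} Your bound $\int_0^\infty e^{-\rho r^2}\abs{\mathcal{L}}^2 r^{d-1}\,dr<(\pi\rho(4j+2k+d))^{-1/2}$ for all $\rho>0$ is exactly the sharp statement (constant $4\norm{\gaussiana{\rho}}_{L^1}=2(\pi/\rho)^{1/2}$, no extremizers) for the Gaussian weight $\gaussiana{\rho}(r)=e^{-\rho r^2}$. It already fails at $j=0$. There the radial profile is proportional to $r^k e^{-r^2/2}$, so the left side equals $(1+\rho)^{-(\nu+1)}$. Put $m=2\nu+2=2k+d$ and $\rho=1/(m-1)$. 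Then $(\pi\rho m)^{1/2}(1+\rho)^{-m/2}=\pi^{1/2}(1+\tfrac{1}{m-1})^{-(m-1)/2}>(\pi/e)^{1/2}>1$.

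This failure is structural, not the result of a crude estimate. Every weight with $r\mapsto w(r^{1/2})$ completely monotone is a superposition of such Gaussians. A bound holding pointwise in $\rho$ would therefore prove the theorem for all completely monotone weights, which is false (Section \ref{section:fails for cm}). Writing $(a+r^2)^{-1}=\int_0^\infty e^{-\rho(a+r^2)}\,d\rho$ and then estimating at fixed $\rho$ discards exactly the extra Laplace averaging that separates Stieltjes weights from completely monotone ones.

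\paragraph{The hypothesis $d\ge 3$ is never used.} Your argument nowhere uses $d\ge3$, i.e.\ $\nu\ge 1/2$. Yet the single-kernel estimate fails for $\nu=0$: Section \ref{section:fails in 2D} gives a ratio $\ge 1.0435$ at $a=1/6$, $j=1$. So any correct proof must use $\nu\ge1/2$ somewhere.

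\paragraph{What the paper supplies instead.} The missing ingredient is an exact evaluation.
\begin{itemize}
\item Dividing $\LaguerreL{j}{\nu}(r)$ by $a+r$ and using orthogonality gives $\Tjnuf{j}{\nu}{\weightphia{a}}=\Productmna{2j+\nu+1}{\nu}{a}$, which is essentially the Whittaker product $M_{-\mu/2,\nu/2}(a)\,W_{-\mu/2,\nu/2}(a)$.
\item An integral formula (GR 6.669.4) then gives $(2\mu a)^{1/2}\Productmna{\mu}{\nu}{a}=\pi^{-1/2}\int_0^\infty s^{-1/2}\Fnas{\nu}{a}{s/\mu}e^{-s}\,ds$.
\item For $\nu\ge 1/2$, $\Fnas{\nu}{a}{s}$ decreases strictly from the value $1$ at $s=0^+$. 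This uses the explicit form of $I_{1/2}$ and the monotonicity of $I_\nu/I_{1/2}$.
\end{itemize}
So domination by the limiting kernel $s^{-1/2}$ does hold, but in the Laplace variable dual to the energy parameter $\mu=2j+\nu+1$, not in the Gaussian width $\rho$. This gives strict increase in $j$ and the limit $a^{-1/2}$ at once, so no WKB heuristic is needed.

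\paragraph{Minor point.} Sharpness comes only from $j\to\infty$. For fixed $j$ and $k\to\infty$ the multiplier tends to $0$, since $\weightphia{a}(r)\le r^{-2}$ and $T_j^{(\nu)}$ applied to $r^{-2}$ equals $1/\nu$.
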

For free particles, it is well known that
\begin{equation} \label{eq:type C free}
	\int_{(x, t) \in \R^d \times \R} (1 + \abs{x}^2)^{-p/2} \abs{ \Hamiltonianfree^{1/4} e^{- i t \Hamiltonianfree} u_0(x) }^2 \, dx \, dt \leq C \norm{ u_0 }_{L^2(\R^d)}^2 
\end{equation}
holds whenever $p>1$.
This inequality was originally proved by \citet[Theorem 2]{KY1989} for $p=2$ and extended for $p>1$ by \citet[Theorem 1]{BK1992}.
\citet[(2)]{Sim1992} showed that its optimal constant is $\pi / 2$ when $d \geq 3$ and $p=2$.
\citet*{BSS2015} generalized \citeauthor{Sim1992}'s result as follows.
\begin{theorem}[{\cite[Theorem 1.4]{BSS2015}}] \label{thm:type C free general}
		Let $d \geq 3$. Suppose that $w \in L^1(\intervaloo{0}{\infty})$ is a non-negative function such that the $d$-dimensional Fourier transform of $w(\abs{\variabledot})$ is positive everywhere. 
	Then the optimal constant of the inequality
\begin{equation} \label{eq:type C free generalized}
	\int_{(x, t) \in \R^d \times \R} w(\abs{x}) \abs{ \Hamiltonianfree^{1/4} e^{- i t \Hamiltonianfree} u_0(x) }^2 \, dx \, dt \leq C \norm{ u_0 }_{L^2(\R^d)}^2 
\end{equation}
	is $\norm{w}_{L^1(\intervaloo{0}{\infty})}$, and there are no extremizers.
	In particular, the optimal constant of the inequality \eqref{eq:type C free} is
	\begin{equation}
		\int_{r \in \intervaloo{0}{\infty}} (1 + r^2)^{-p/2} \, dr = \frac{\pi^{1/2} \Gamma( (p-1)/2 ) }{ 2 \Gamma(p/2) } 
	\end{equation}
	when $d \geq 3$ and $p \in \intervaloo{1}{\infty}$.
\end{theorem}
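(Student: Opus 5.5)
The plan is to diagonalize the quadratic form on the left of \eqref{eq:type C free generalized} with the Fourier transform in both $x$ and $t$, so that the constant becomes a supremum of explicit multipliers. Write $e^{-itH_0}u_0(x)=(2\pi)^{-d}\int e^{ix\cdot\xi-it|\xi|^2}\widehat{u_0}(\xi)\,d\xi$ in polar coordinates $\xi=\rho\theta$, and apply Plancherel in $t$ after substituting $\tau=\rho^2$. The time integral then collapses to a diagonal in $\rho$, and the factor $H_0^{1/4}$, i.e.\ $\rho^{1/2}$ on the Fourier side, exactly cancels the Jacobian $d\tau=2\rho\,d\rho$. Inserting $w(|x|)$ and integrating in $x$ produces $\widehat{W}(\rho(\theta-\theta'))$, where $W=w(|\cdot|)$ on $\R^d$ and $\widehat W$ is its (radial, by hypothesis strictly positive) $d$-dimensional Fourier transform. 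This gives, up to an explicit constant $c_d$,
\begin{equation}
\mathrm{LHS}=c_d\int_0^\infty \rho^{2d-2}\iint_{\S^{d-1}\times\S^{d-1}} \widehat{u_0}(\rho\theta)\conjugate{\widehat{u_0}(\rho\theta')}\,\widehat W(\rho|\theta-\theta'|)\,d\theta\,d\theta'\,d\rho .
\end{equation}

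For each fixed $\rho$ the kernel is zonal on $\S^{d-1}$. By Funk--Hecke it acts on degree-$k$ spherical harmonics by the eigenvalue
\begin{equation}
\mu_k(\rho)=\int_{\S^{d-1}}\widehat W(\rho|e-\theta|)\,P_k(e\cdot\theta)\,d\theta ,
\end{equation}
where $P_k$ is the normalized Gegenbauer polynomial with $|P_k|\le 1$ and $P_k(1)=1$. Since $\widehat W>0$, we get $\mu_k(\rho)\le\mu_0(\rho)$, with strict inequality for $k\ge1$ because $P_k\not\equiv 1$. This reduces everything to $k=0$. By rescaling, $\rho^{d-1}\mu_0(\rho)=\int_{\rho\S^{d-1}}\widehat W(|\rho e-\eta|)\,d\sigma(\eta)$. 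Translating so that $\rho e$ becomes the origin, this is the integral of the positive radial function $F(s)=\widehat W(s)$ over a sphere of radius $\rho$ passing through the origin. The comparison object is the integral of $F$ over the tangent hyperplane at the origin.

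The step I expect to be the main obstacle is comparing these two integrals. Since $F$ need not be monotone, a layer-cake argument is not enough and I will compare densities of the distance $s=|\eta|$. On the sphere, parametrize by the polar angle $\varphi$ from the origin, so that $s=2\rho\sin(\varphi/2)$. Then
\begin{equation}
\frac{dA}{ds}=\frac{\omega_{d-2}\rho^{d-1}\sin^{d-2}\varphi}{\rho\cos(\varphi/2)}=\omega_{d-2}\,s^{d-2}\cos^{d-3}(\varphi/2),
\end{equation}
while on the hyperplane the density is $\omega_{d-2}s^{d-2}$. For $d\ge3$ we have $\cos^{d-3}\le1$, and the sphere density vanishes for $s>2\rho$. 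Hence
\begin{equation}
\rho^{d-1}\mu_0(\rho)<\int_{\R^{d-1}}\widehat W(|y|)\,dy ,
\end{equation}
strictly for every finite $\rho$ because $F>0$. Moreover, since $\cos(\varphi/2)\to1$ for each fixed $s$ as $\rho\to\infty$, the left side increases to the right side by monotone convergence. The right side is $(2\pi)^{d-1}\int_\R w(|t|)\,dt=2(2\pi)^{d-1}\norm{w}_{L^1(\intervaloo{0}{\infty})}$, by restricting the Fourier inversion to a line. The careful bookkeeping of $c_d$ should then convert this into exactly $\norm{w}_{L^1}$.

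Finally, the three claims of the theorem follow from this pointwise bound.
\begin{itemize}
\item[(i)] The inequality holds with $C=\norm{w}_{L^1}$, since every multiplier $\rho^{d-1}\mu_k(\rho)$ is bounded by the limiting value.
\item[(ii)] The constant is optimal: take radial $u_0$ with $\widehat{u_0}$ concentrated on a shell $\rho\in[R,R+1]$ and let $R\to\infty$, so that $\rho^{d-1}\mu_0(\rho)$ approaches its supremum.
\item[(iii)] There are no extremizers, since equality would force $\widehat{u_0}$ to live where the multiplier equals its supremum, and the inequality is strict for every $\rho<\infty$ and every $k$.
\end{itemize}
The explicit value for $w(r)=(1+r^2)^{-p/2}$ is a Beta integral. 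Positivity of $\widehat W$ there is the classical fact that this Fourier transform is a positive multiple of a Macdonald (modified Bessel) function.
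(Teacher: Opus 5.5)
Your proposal is correct, and the bookkeeping you left open does close. With $c_d=\pi(2\pi)^{-2d}$ and $\norm{u_0}_{L^2(\R^d)}^2=(2\pi)^{-d}\norm{\widehat{u_0}}_{L^2(\R^d)}^2$, the optimal constant is $\pi(2\pi)^{-d}\sup_{\rho>0}\rho^{d-1}\mu_0(\rho)=\pi(2\pi)^{-d}\cdot 2(2\pi)^{d-1}\norm{w}_{L^1(\intervaloo{0}{\infty})}=\norm{w}_{L^1(\intervaloo{0}{\infty})}$.

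The paper gives no proof of this statement; it quotes it from \cite{BSS2015}. Your route is essentially the Funk--Hecke argument of that source. Positivity of $\widehat W$ together with $\abs{P_k}\le 1$ reduces everything to $k=0$. Your sphere-versus-tangent-hyperplane density comparison is just the substitution $s=\rho\sqrt{2(1-t)}$ in the Funk--Hecke integral, which gives $\rho^{d-1}\mu_0(\rho)=\abs{\S^{d-2}}\int_0^{2\rho}\widehat W(s)\,s^{d-2}(1-s^2/4\rho^2)^{(d-3)/2}\,ds$. This is visibly increasing in $\rho$ when $d\ge 3$.

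By contrast, the paper's harmonic-oscillator analogue has no such Fourier-side geometry. There, monotonicity in the Laguerre index $j$ comes from the Whittaker-product formula for the weights $(a+r^2)^{-1}$. In that setting Fourier positivity alone is genuinely insufficient, as the paper's Gaussian counterexample shows, so the stronger Stieltjes hypothesis is needed.

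The only places where you need more care than you give are technical. When $w(\abs{\cdot})\notin L^1(\R^d)$ (e.g.\ $p\le d$ for $w(r)=(1+r^2)^{-p/2}$), two steps require an approximation argument: producing the kernel $\widehat W(\rho\abs{\theta-\theta'})$, and the slice identity $\int_{\R^{d-1}}\widehat W(\abs{y})\,dy=(2\pi)^{d-1}\int_{\R}w(\abs{t})\,dt$. This does not affect the structure of the proof.
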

Note that if $w \in L^\infty(\intervaloo{0}{\infty}) \setminus \{0\}$ is such that $r \mapsto w(r^{1/2})$ is completely monotone, then $w(\abs{\variabledot})$ is the $d$-dimensional Fourier transform of some radially symmetric Borel measure on $\R^d$ for every $d \geq 1$.
This follows from the definition of complete monotonicity \ref{item:completely monotone 2} and the fact that the Fourier transform of the Gaussian is also the Gaussian. 
Furthermore, the converse is also true (the Schoenberg theorem \cite[Theorem 2]{Sch1938}). 
In this sense, our Theorem \ref{main thm:type C HO general} is an analogue of Theorem \ref{thm:type C free general}.
We remark that both theorems fail when $d=2$; see Section \ref{section:fails in 2D} for Theorem \ref{main thm:type C HO general} and \citet[Theorem 6.2]{Suz2025} for Theorem \ref{thm:type C free general}, respectively.
Moreover, Theorem \ref{main thm:type C HO general} also fails for $w \in L^1(\intervaloo{0}{\infty}) \setminus \{0\}$ such that
\begin{equation}
	r \longmapsto w(r^{1/2})
\end{equation}
is completely monotone in general; see Section \ref{section:fails for cm}.
\section{Preliminaries} \label{section:preliminaries}
In this section, we introduce some well-known results on eigenvalues and eigenfunctions of the spherical Laplacian $-\LaplacianS$ and the Hamiltonian of the quantum harmonic oscillator $\Hamiltonian = - \LaplacianR + \abs{x}^2$.
These facts can be found in standard references; see, for example, \citet{Mul1966, Tha1993, Sze1975}. 
Throughout this section, we assume that $d \geq 2$.
\subsection{Spherical Laplacian and harmonic polynomials} \label{section:spherical Laplacian}
It is well known that the spectrum of the spherical Laplacian $\LaplacianS$ is
\begin{equation} \label{eq:spectrum spherical Laplacian}
\sigma(-\LaplacianS) = \sigma_{\textup{p}}(-\LaplacianS) = \set{ k (k + d - 2) }{ k \in \N } .
\end{equation}
Furthermore, the eigenspace associated with the eigenvalue $k (k + d - 2)$ coincides with the space of spherical harmonic polynomials of degree $k$ in $d$ variables, which will be denoted by $\HPk{k}(\S^{d-1})$.
We note that \eqref{eq:spectrum spherical Laplacian} is equivalent to 
\begin{equation}
\sigma(-\LaplacianS + (d/2-1)^2) = \sigma_{\textup{p}}(-\LaplacianS + (d/2-1)^2) = \set{ (k + d/2 - 1)^2 }{ k \in \N } .
\end{equation}
Hereinafter, we mainly use
\begin{equation}
-\LambdaS \coloneqq -\LaplacianS + (d/2-1)^2
\end{equation}
rather than $-\LaplacianS$ for simplicity. 
\subsection{Quantum harmonic oscillator and associated Laguerre polynomials} \label{section:Laguerre}
It is well known that the spectrum of $\Hamiltonian = - \LaplacianR + \abs{x}^2$ is
\begin{equation}
	\sigma(\Hamiltonian) = \sigma_{\textup{p}}(\Hamiltonian) = \set{ 2N+d }{ N \in \N } .
\end{equation}
\citet{Che2011} and \citet{BR2011} proved Theorems \ref{thm:Chen} and \ref{thm:BR} by using the following orthonormal basis of the associated eigenspace constructed from the Hermite polynomials.
 \begin{proposition} \label{prop:Hermite decomposition}
 	Let $\Hk{k} \colon \R \to \R$ be the Hermite polynomial of degree $k$.
 	Then $\{ \Hk{k} \}_{k \in \N}$ is a complete orthogonal system in $L^2(\R, e^{-x^2} \, dx )$ satisfying
 	\begin{equation}
 	\int_{x \in \R} \abs{\Hk{k}(x)}^2 e^{-x^2} \, dx = \pi^{1/2} 2^k \Gamma(k+1) .
 	\end{equation}
 	Now, for each multi-index $k = (k_j)_{1 \leq j \leq d} \in \N^d$, we define $\hk{k} \colon \R^d \to \R$ by 
 	\begin{equation}
 		\hk{k}(x) \coloneqq \pi^{-d/4} e^{-\abs{x}^2/2} \prod_{j=1}^{d} (  2^{k_j} \Gamma(k_j + 1) )^{-1/2} \Hk{k_j}(x_j) , \quad x = (x_j)_{1 \leq j \leq d} \in \R^d .
 		\label{eq:Hermite functions}
 	\end{equation}
 	Then
 	\begin{equation}
 		\Set{ \hk{k} }{ k \in \N^d \text{ such that } \sum_{j=1}^{d} k_j = N }
 	\end{equation}
 	is an orthonormal basis of the eigenspace of $\Hamiltonian$ associated with the eigenvalue $2N + d$.
 \end{proposition}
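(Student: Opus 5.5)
Since this proposition is the classical Hermite-function description of the eigenspaces of $\Hamiltonian$, I would prove it by reducing to one dimension.

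\emph{Step 1: orthogonality and norms.} Using the Rodrigues formula $\Hk{k}(x) = (-1)^k e^{x^2} \frac{d^k}{dx^k} e^{-x^2}$ and integrating by parts $k$ times, one obtains
\begin{equation}
	\int_{x \in \R} \Hk{k}(x) p(x) e^{-x^2} \, dx = \int_{x \in \R} p^{(k)}(x) e^{-x^2} \, dx
\end{equation}
for every polynomial $p$. This vanishes when $\deg p < k$. When $p = \Hk{k}$, whose leading coefficient is $2^k$, the right-hand side equals $2^k \Gamma(k+1) \pi^{1/2}$.

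\emph{Step 2: completeness.} Suppose $f \in L^2(\R, e^{-x^2} dx)$ is orthogonal to every $\Hk{k}$, hence to every polynomial. Then
\begin{equation}
	F(z) = \int_{x \in \R} f(x) e^{-x^2} e^{izx} \, dx
\end{equation}
is entire, because $f e^{-x^2} e^{\epsilon \abs{x}}$ is integrable for every $\epsilon > 0$ by Cauchy--Schwarz. All derivatives of $F$ at $0$ vanish, so $F \equiv 0$. Fourier uniqueness then gives $f e^{-x^2} = 0$, so $f = 0$.

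\emph{Step 3: eigenfunctions.} The Hermite ODE $\Hk{k}'' - 2x \Hk{k}' + 2k \Hk{k} = 0$ follows from the Rodrigues formula, or from the generating function $e^{2xt - t^2}$. Substituting $h = e^{-x^2/2} \Hk{k}$ turns it into
\begin{equation}
	-h'' + x^2 h = (2k+1) h .
\end{equation}
By Step 1, the normalized one-dimensional functions form an orthonormal system, and by Step 2 they form a basis of $L^2(\R)$, since multiplication by $e^{-x^2/2}$ is unitary from $L^2(\R, e^{-x^2}dx)$ onto $L^2(\R)$.

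\emph{Step 4: tensor products.} Products of orthonormal bases of $L^2(\R)$ give an orthonormal basis of $L^2(\R^d)$. Since $\Hamiltonian = \sum_{j} \bigl( -\partial_j^2 + x_j^2 \bigr)$, each $\hk{k}$ satisfies $\Hamiltonian \hk{k} = (2\abs{k} + d) \hk{k}$, where $\abs{k} = \sum_j k_j$.

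\emph{Step 5: the eigenspace.} Let $u$ be an eigenfunction with eigenvalue $2N+d$, and expand it in this basis. Self-adjointness gives, for each $k$,
\begin{equation}
	(2N+d) \langle u, \hk{k} \rangle = \langle u, \Hamiltonian \hk{k} \rangle = (2\abs{k}+d) \langle u, \hk{k} \rangle .
\end{equation}
Hence only the coefficients with $\abs{k} = N$ survive. This proves that the $\hk{k}$ with $\abs{k} = N$ form an orthonormal basis of the eigenspace. It also shows that $\sigma(\Hamiltonian) = \set{2N+d}{N \in \N}$ consists only of eigenvalues: $\Hamiltonian$ is diagonal in a complete orthonormal basis with eigenvalues tending to infinity.

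\emph{Main obstacle.} The only non-routine point is completeness in Step 2. The entire-function argument handles it, and it is the step where care is needed.
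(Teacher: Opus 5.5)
Your proof is correct, and it is the standard textbook argument: the paper does not prove this proposition itself but treats it as classical, citing M\"uller, Thaller and Szeg\H{o}, and your route (Rodrigues formula for orthogonality and norms, completeness via analyticity of the Fourier transform of $f e^{-x^2}$, tensor products, and the symmetry argument of Step 5 to identify the eigenspace) is how such references establish it. The one point you pass over quickly lies outside the statement itself: the closing claim about $\sigma(\Hamiltonian)$ needs the self-adjoint $\Hamiltonian$ to coincide with the diagonal operator in the basis $\{\hk{k}\}$, and this follows from your Step 5 computation together with the maximality of self-adjoint operators.
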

However, this basis is not well suited for finding the optimal constants of the inequalities \eqref{eq:type B HO} and \eqref{eq:type C HO}.
The main difficulty here is that $\{ h_k \}_k$ is not orthogonal in the weighted space $L^2(\R^d, w(\abs{x})\, dx)$ generally, even though it is on $L^2(\R^d)$.
Moreover, it is also inconvenient to establish the identity \eqref{eq:type B identity HO}, since we cannot work with both $\Hamiltonian$ and $\LambdaS$ simultaneously.
In order to avoid these problems, we shall use the following basis constructed from the generalized Laguerre polynomials.
\begin{proposition} \label{prop:Laguerre decomposition}
Let $\LaguerreL{j}{\nu} \colon \intervaloo{0}{\infty} \to \R$ be the generalized Laguerre polynomials of degree $j \in \N$ and order $\nu \in \intervaloo{-1}{\infty}$, which can be defined in various ways, for example, by the hypergeometric representation
\begin{equation} \label{eq:Laguerre polynomials}
	\LaguerreL{j}{\nu}(r) 
	\coloneqq \frac{\Gamma(j+\nu+1)}{\Gamma(\nu+1) \Gamma(j+1)} \hypergeometricpFq{1}{1}{-j}{\nu+1}{r} 
	= \frac{\Gamma(j+\nu+1)}{\Gamma(\nu+1) \Gamma(j+1)} e^{r} \hypergeometricpFq{1}{1}{j+\nu+1}{\nu+1}{-r} ,
\end{equation}
where $\hypergeometricpFqNoArgument{1}{1}$ denotes the confluent hypergeometric function of the first kind.
Then $\{ \LaguerreL{j}{\nu} \}_{j \in \N}$ is a complete orthogonal system in $L^2(\intervaloo{0}{\infty}, e^{-r} r^{\nu} \, dr )$ satisfying
\begin{equation}
 	\int_{r \in \intervaloo{0}{\infty}} \abs{\LaguerreL{j}{\nu}(r)}^2 e^{-r} r^{\nu} \, dr = \frac{\Gamma(j+\nu+1)}{\Gamma(j+1)} 
\end{equation}
for each $\nu \in \intervaloo{-1}{\infty}$.
 	Now we define $\LaguerrecalLjnu{j}{\nu} \colon \intervaloo{0}{\infty} \to \R$ by 
\begin{equation}
\LaguerrecalLjnu{j}{\nu}(r) \coloneqq \mleft( \frac{ 2 \Gamma(j+1) }{ \Gamma(j + \nu + 1) } \mright)^{1/2} e^{-r^2 / 2} r^{\nu + 1/2} \LaguerreL{j}{\nu}(r^2) ,
	\label{eq:Laguerre functions} 
\end{equation}
and let $\{ \Ykn{k}{n} \}_n$ be an orthonormal basis of $\HPk{k}(\S^{d-1})$ (the space of spherical harmonic polynomials of degree $k$ in $d$ variables; see Section \ref{section:spherical Laplacian}).
Then
\begin{equation}
\Set{ \abs{x}^{-(d-1)/2} \LaguerrecalLjnu{j}{k + d/2 - 1}(\abs{x}) \Ykn{k}{n}(x/\abs{x}) }{ (j, k, n) \in \N^3 \text{ such that } 2j + k = N, \, 1 \leq n \leq \dim \HPk{k}(\S^{d-1}) }
\end{equation}
 	is an orthonormal basis of the eigenspace of $\Hamiltonian$ associated with the eigenvalue $2N + d$.
\end{proposition}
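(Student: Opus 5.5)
The plan is to separate variables in polar coordinates and reduce each spherical-harmonic sector to a radial Sturm--Liouville problem. Write $x = r\omega$ with $r>0$, $\omega \in \S^{d-1}$. Then
\begin{equation}
\Hamiltonian = -\partial_r^2 - \frac{d-1}{r}\partial_r - \frac{1}{r^2}\LaplacianS + r^2 .
\end{equation}
Conjugate by $r^{(d-1)/2}$: for $u(x) = r^{-(d-1)/2} f(r) \Ykn{k}{n}(\omega)$, a direct computation gives $\Hamiltonian u = r^{-(d-1)/2} (A_\nu f)(r)\, \Ykn{k}{n}(\omega)$, where
\begin{equation}
A_\nu = -\frac{d^2}{dr^2} + \frac{\nu^2 - 1/4}{r^2} + r^2, \qquad \nu = k + d/2 - 1 .
\end{equation}
Here one uses that $-\LaplacianS$ has eigenvalue $k(k+d-2)$ on $\HPk{k}(\S^{d-1})$ (Section \ref{section:spherical Laplacian}), and that $k(k+d-2) + (d-1)(d-3)/4 = \nu^2 - 1/4$. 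Since $L^2(\R^d) = \bigoplus_{k,n} \{ r^{-(d-1)/2} f(r) \Ykn{k}{n} : f \in L^2(\intervaloo{0}{\infty}) \}$ isometrically, with the $L^2(dr)$ norm on each sector, it suffices to show two things for each fixed $\nu \geq d/2 - 1 \geq 0$. First, $\{\LaguerrecalLjnu{j}{\nu}\}_{j\in\N}$ is an orthonormal basis of $L^2(\intervaloo{0}{\infty}, dr)$. Second, $A_\nu \LaguerrecalLjnu{j}{\nu} = (4j + 2\nu + 2)\LaguerrecalLjnu{j}{\nu}$. The eigenvalue works out to $4j + 2k + d = 2(2j+k) + d$, so $N = 2j+k$, as claimed.

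\textbf{Orthonormality and completeness.} Substitute $s = r^2$, so $dr = ds/(2 s^{1/2})$. This gives
\begin{equation}
\int_0^\infty \LaguerrecalLjnu{j}{\nu}\LaguerrecalLjnu{i}{\nu}\,dr = \frac{2\sqrt{\Gamma(j+1)\Gamma(i+1)}}{\sqrt{\Gamma(j+\nu+1)\Gamma(i+\nu+1)}}\cdot\frac12\int_0^\infty e^{-s}s^{\nu}\LaguerreL{j}{\nu}(s)\LaguerreL{i}{\nu}(s)\,ds ,
\end{equation}
which equals $\delta_{ij}$ by the stated orthogonality relation. For completeness, the map $f(s) \mapsto 2^{-1/2} e^{-r^2/2} r^{\nu+1/2} f(r^2)$ is a unitary from $L^2(e^{-s}s^\nu ds)$ onto $L^2(dr)$; it is a bijection because multiplication by the positive function $e^{-r^2/2}r^{\nu+1/2}$ combined with the change of variable is invertible. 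Completeness of $\{\LaguerreL{j}{\nu}\}_j$ therefore transfers to $\{\LaguerrecalLjnu{j}{\nu}\}_j$.

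\textbf{The eigen-equation.} Write $\LaguerrecalLjnu{j}{\nu}(r) = c\, g(r^2)$ with $g(s) = e^{-s/2}s^{(\nu+1/2)/2}\LaguerreL{j}{\nu}(s)$. Use $\frac{d}{dr} = 2r\frac{d}{ds}$, so that $\frac{d^2}{dr^2} = 4s\frac{d^2}{ds^2} + 2\frac{d}{ds}$. Then invoke the Laguerre differential equation
\begin{equation}
s y'' + (\nu + 1 - s) y' + j y = 0 ,
\end{equation}
which follows from the ${}_1F_1$ representation \eqref{eq:Laguerre polynomials} and Kummer's equation. This is a routine but bookkeeping-heavy computation. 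The prefactor $e^{-s/2}s^{\nu/2+1/4}$ is chosen precisely so that the first-order terms cancel, leaving the potential $(\nu^2-1/4)/r^2 + r^2$ and the constant $4j + 2\nu + 2$. I expect this verification to be the main obstacle, though only computationally.

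\textbf{Conclusion.} The functions in the statement are therefore orthonormal eigenfunctions of $\Hamiltonian$ with eigenvalue $2N+d$. Taking all $(j,k,n)$, they form a complete orthonormal system of $L^2(\R^d)$: completeness holds sector by sector by the unitary correspondence above, and the spherical harmonics are complete in $L^2(\S^{d-1})$. Consequently every eigenfunction of $\Hamiltonian$ with eigenvalue $2N+d$ expands only in those basis elements with $2j+k=N$, because the other elements are eigenfunctions for different eigenvalues and hence orthogonal to it. So these elements span exactly the eigenspace, and they form an orthonormal basis of it. As a consistency check, one can also deduce $\sigma(\Hamiltonian) = \set{2N+d}{N\in\N}$ from this.
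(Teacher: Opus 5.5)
The paper does not prove this proposition; it collects it from standard references (M\"uller, Thaller, Szeg\H{o}). Your separation-of-variables argument is the standard proof, and it is correct. Like the paper, you take the classical Laguerre facts as known: orthogonality, the norms, completeness, and Kummer's equation $s y'' + (\nu+1-s)y' + jy = 0$. Your computations from that input check out:
\begin{itemize}
\item the reduction to $A_\nu$ via $k(k+d-2) + (d-1)(d-3)/4 = \nu^2 - 1/4$;
\item the orthonormality calculation under $s = r^2$;
\item the eigen-equation, where the first-order terms do cancel, leaving $4\bigl(sL'' + (\nu+1-s)L'\bigr) = -4jL$ and hence the eigenvalue $4j+2\nu+2 = 2(2j+k)+d$.
\end{itemize}
Your route is self-contained and recovers $\sigma(\Hamiltonian)$ without the Hermite basis of Proposition 2.1. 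An alternative ending would count dimensions against that basis, using $\sum_{2j+k=N} \dim \HPk{k}(\S^{d-1}) = \binom{N+d-1}{d-1}$. That version needs only orthonormality and the eigen-equation, not the completeness transfer.

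Two points should be tightened.

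First, the map $f \mapsto 2^{-1/2} e^{-r^2/2} r^{\nu+1/2} f(r^2)$ is one half of a unitary. The unitary from $L^2(e^{-s}s^\nu\,ds)$ onto $L^2(dr)$ carries the factor $2^{1/2}$, consistent with the normalization of $\LaguerrecalLjnu{j}{\nu}$. This slip does not affect the completeness argument.

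Second, and more substantively, your polar-coordinate computation only gives $\Hamiltonian u = (2N+d)u$ on $\R^d \setminus \{0\}$. The radial equation alone does not determine the operator when $0 \le \nu < 1$, which happens for $k = 0$ and $d = 2, 3$. For example, with $d = 3$, $k = 0$, a solution of $A_{1/2} f = \lambda f$ that decays at infinity and has $f(0) \neq 0$ exists for any $\lambda$. It gives $u \sim f(0)/\abs{x}$ near the origin, which is square integrable but not in the domain of $\Hamiltonian$.

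What makes your functions genuine eigenfunctions is the identity
\[
\abs{x}^{-(d-1)/2} \LaguerrecalLjnu{j}{\nu}(\abs{x}) \Ykn{k}{n}(x/\abs{x}) = c\, e^{-\abs{x}^2/2} \LaguerreL{j}{\nu}(\abs{x}^2)\, \abs{x}^k \Ykn{k}{n}(x/\abs{x}), \qquad \nu = k + d/2 - 1 ,
\]
which exhibits them as a Gaussian times a polynomial, since $\abs{x}^k \Ykn{k}{n}(x/\abs{x})$ is a harmonic polynomial. They are therefore Schwartz functions in the domain of $\Hamiltonian$, on which $\Hamiltonian$ acts classically. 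You should state this explicitly. Your final step, that eigenvectors of the self-adjoint $\Hamiltonian$ for distinct eigenvalues are orthogonal, uses exactly that the basis elements lie in its domain.
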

\subsection{Simultaneous spectral decomposition}
Combining facts in Sections \ref{section:spherical Laplacian} and \ref{section:Laguerre}, we obtain the following simultaneous spectral decomposition.
\begin{proposition} \label{prop:decomposition}
Let $f \in L^2(\R^d)$. For each $(j, k, n) \in \N^3$ satisfying $1 \leq n \leq \dim \HPk{k}(\S^{d-1})$, we write
\begin{equation}
\fjkn{f}{j}{k}{n} \coloneqq \int_{r \in \intervaloo{0}{\infty}} \mleft( \int_{\theta \in \S^{d-1}} f(r \theta) \conjugate{\Ykn{k}{n}(\theta)} \, d\sigma_{\S^{d-1}}(\theta) \mright) r^{(d-1)/2} \LaguerrecalLjnu{j}{k + d/2 - 1}(r) \, dr.
\end{equation}
Then we have
\begin{gather}
f(r \theta) = r^{-(d-1)/2} \sumjkn \fjkn{f}{j}{k}{n} \LaguerrecalLjnu{j}{k + d/2 - 1}(r) \Ykn{k}{n}(\theta) , \quad (r, \theta) \in \intervaloo{0}{\infty} \times \S^{d-1}, \\
\norm{f}_{L^2(\R^d)}^2 = \sumjkn \abs{\fjkn{f}{j}{k}{n}}^2 \eqcolon \norm{\widehat{f}}_{\ell^2}^2 .
\end{gather}
\end{proposition}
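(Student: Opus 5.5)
The plan is to derive the proposition from Proposition \ref{prop:Laguerre decomposition} and the spherical harmonic decomposition of Section \ref{section:spherical Laplacian}. The key idea is to separate variables via polar coordinates and to show that the family
\begin{equation}
e_{j,k,n}(x) \coloneqq \abs{x}^{-(d-1)/2} \LaguerrecalLjnu{j}{k + d/2 - 1}(\abs{x}) \Ykn{k}{n}(x/\abs{x}), \quad (j,k,n) \in \N^3, \ 1 \leq n \leq \dim \HPk{k}(\S^{d-1}),
\end{equation}
is an orthonormal basis of the whole of $L^2(\R^d)$. Once this is known, $\fjkn{f}{j}{k}{n}$ is exactly $\langle f, e_{j,k,n}\rangle_{L^2(\R^d)}$, as one sees by writing $dx = r^{d-1}\,dr\,d\sigma_{\S^{d-1}}$. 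The expansion formula then holds with the series converging in $L^2(\R^d)$, which is how the pointwise-looking identity should be read, and Parseval gives the norm identity.

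\textbf{Step 1 (polar coordinates).} I would use the unitary map
\begin{equation}
U \colon L^2(\R^d) \to L^2(\intervaloo{0}{\infty}, dr) \otimes L^2(\S^{d-1}), \qquad (Uf)(r,\theta) = r^{(d-1)/2} f(r\theta).
\end{equation}
Since $-\LaplacianS$ is self-adjoint with pure point spectrum \eqref{eq:spectrum spherical Laplacian}, we have $L^2(\S^{d-1}) = \bigoplus_k \HPk{k}(\S^{d-1})$, so $\{\Ykn{k}{n}\}_{k,n}$ is an orthonormal basis of $L^2(\S^{d-1})$.

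\textbf{Step 2 (the radial family).} Next I would show that, for each fixed $\nu = k + d/2 - 1 > -1$, the family $\{\LaguerrecalLjnu{j}{\nu}\}_{j\in\N}$ is an orthonormal basis of $L^2(\intervaloo{0}{\infty}, dr)$.

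\emph{Orthonormality.} Substituting $s = r^2$ gives
\begin{equation}
\int_0^\infty \LaguerrecalLjnu{j}{\nu}(r)\LaguerrecalLjnu{j'}{\nu}(r)\,dr = \frac{(\Gamma(j+1)\Gamma(j'+1))^{1/2}}{(\Gamma(j+\nu+1)\Gamma(j'+\nu+1))^{1/2}} \int_0^\infty \LaguerreL{j}{\nu}(s)\LaguerreL{j'}{\nu}(s) e^{-s}s^\nu\,ds.
\end{equation}
By Proposition \ref{prop:Laguerre decomposition} the right-hand side equals $\delta_{jj'}$.

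\emph{Completeness.} The map
\begin{equation}
g \mapsto h, \qquad h(s) = g(s^{1/2})\, s^{-\nu/2-1/4} e^{s/2},
\end{equation}
is, up to the constant $2^{-1/2}$, an isometry from $L^2(dr)$ onto $L^2(e^{-s}s^\nu\,ds)$. It sends $\LaguerrecalLjnu{j}{\nu}$ to a constant multiple of $\LaguerreL{j}{\nu}$. Hence completeness of $\{\LaguerreL{j}{\nu}\}_j$ transfers to completeness of $\{\LaguerrecalLjnu{j}{\nu}\}_j$.

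\textbf{Step 3 (assembling the basis).} The space $L^2(dr)\otimes L^2(\S^{d-1})$ decomposes as $\bigoplus_{k,n} L^2(dr)\otimes \Ykn{k}{n}$. In the $(k,n)$ summand we use the radial basis from Step 2 with order $\nu = k + d/2-1$. The union of these bases is an orthonormal basis of the whole space, and pulling back by $U^{-1}$ gives the family $\{e_{j,k,n}\}$.

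Concretely, if $f \perp e_{j,k,n}$ for all indices, then for each $(k,n)$ the radial coefficient $r^{(d-1)/2}\int_{\S^{d-1}} f(r\theta)\conjugate{\Ykn{k}{n}(\theta)}\,d\sigma_{\S^{d-1}}(\theta)$ lies in $L^2(dr)$ by Fubini and Cauchy--Schwarz. It is orthogonal to every $\LaguerrecalLjnu{j}{k+d/2-1}$, so it vanishes by Step 2. Hence every spherical coefficient of $f$ vanishes for a.e. $r$, and $f=0$.

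The only step requiring care is the completeness in Step 2, specifically checking that the change of variables is a genuine isometry onto $L^2(e^{-s}s^\nu\,ds)$. The rest is bookkeeping; in particular, the resulting basis agrees with the per-eigenspace bases of Proposition \ref{prop:Laguerre decomposition} grouped by $N = 2j+k$, which is a useful consistency check.
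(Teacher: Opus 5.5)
Your argument is correct, but it obtains completeness by a different route from the paper's. The paper only says ``combining facts in Sections 2.1 and 2.2'', and the natural reading is that it takes the eigenspace half of Proposition \ref{prop:Laguerre decomposition} as input: for each $N$, the functions $e_{j,k,n}$ with $2j+k=N$ form an orthonormal basis of $\ker(\Hamiltonian-(2N+d))$. Since $\sigma(\Hamiltonian)=\sigma_{\textup{p}}(\Hamiltonian)=\{2N+d : N\in\N\}$ is a discrete set of eigenvalues, $L^2(\R^d)$ is the orthogonal sum of these eigenspaces (equivalently, the Hermite functions of Proposition \ref{prop:Hermite decomposition} are complete). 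The union over $N$ is therefore an orthonormal basis, and as $N$ runs through $\N$ the triples with $2j+k=N$ exhaust all $(j,k,n)$. You instead use only the other half of that proposition, namely completeness of $\{\LaguerreL{j}{\nu}\}_j$ in $L^2(e^{-s}s^\nu\,ds)$ for each fixed $\nu$. You combine it with completeness of spherical harmonics and build the basis as a tensor-product basis in polar coordinates. Your route is more elementary and self-contained: it never uses that the $e_{j,k,n}$ are eigenfunctions of $\Hamiltonian$, nor the dimension identity $\dim\ker(\Hamiltonian-(2N+d))=\sum_{2j+k=N}\dim\HPk{k}(\S^{d-1})$ hidden inside the eigenspace statement. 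The paper's route gives at the same time that each basis element is a joint eigenfunction of $\Hamiltonian$ (eigenvalue $4j+2k+d$) and of $-\LambdaS$ (eigenvalue $(k+d/2-1)^2$). That joint eigenfunction property is what is actually used afterwards: in defining $m(\Hamiltonian,-\LambdaS)$, and in the proof of Theorem \ref{thm:optimal constant}, where $e^{-it\Hamiltonian}$ must act diagonally on the expansion. In your approach this property has to be imported separately from the eigenspace part of Proposition \ref{prop:Laguerre decomposition}, which is what your closing ``consistency check'' amounts to. Reading the expansion as an $L^2$-convergent (hence unconditionally convergent) orthonormal expansion is the right interpretation of the displayed identity.
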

This decomposition allows us to define a (possibly unbounded) operator $m(\Hamiltonian, -\LambdaS)$ on $L^2(\R^d)$ for each $m \colon\sigma_{\textup{p}}(\Hamiltonian) \times \sigma_{\textup{p}}(-\LambdaS) \to \C$ via
\begin{align}
& \quad m(\Hamiltonian, -\LambdaS) f \\
&\coloneqq r^{-(d-1)/2} \sumjkn m(4j+2k+d, (k+d/2-1)^2) \fjkn{f}{j}{k}{n} \LaguerrecalLjnu{j}{k + d/2 - 1}(r) \Ykn{k}{n}(\theta) .
\end{align}
\section{Formula for optimal constants} \label{section:formula}
In order to prove our Theorems \ref{main thm:type B HO}, \ref{main thm:type B identity HO}, and \ref{main thm:type C HO general}, we first establish the following formula for the optimal constants, which is an analogue of \citet[Theorem 5.4]{BS2014} for free particles.
\begin{theorem} \label{thm:optimal constant}
	Let $d \geq 2$, $w \colon \intervaloo{0}{\infty} \to \intervalco{0}{\infty}$, and $\psi \colon \sigma(\Hamiltonian) \times \sigma(-\LambdaS) \to \intervalco{0}{\infty}$.
We write $\hoConstwpd{w}{\psi}{d}$ and $\hoconstwpd{w}{\psi}{d}$ for the optimal constants of the following inequalities, respectively:
\begin{align}
	\int_{(x, t) \in \R^d \times \intervalcc{0}{2\pi}} w(\abs{x}) \abs{ \psi(\Hamiltonian, - \LambdaS) e^{- i t \Hamiltonian} u_0(x) }^2 \, dx \, dt &\leq C \norm{u_0}_{L^2(\R^d)}^2 , 
	\label{eq:smoothing estimate general} \\
	\int_{(x, t) \in \R^d \times \intervalcc{0}{2\pi}} w(\abs{x}) \abs{ \psi(\Hamiltonian, - \LambdaS) e^{- i t \Hamiltonian} u_0(x) }^2 \, dx \, dt &\geq c \norm{u_0}_{L^2(\R^d)}^2 .
	\label{eq:smoothing estimate general reverse}
\end{align}
For each $\nu \in \intervaloo{-1}{\infty}$ and $j \in \N$, we define a linear functional $\Tjnu{j}{\nu}$ by
\begin{equation}
\Tjnuf{j}{\nu}{f} \coloneqq \int_{r \in \intervaloo{0}{\infty}} f(r) \LaguerrecalLjnu{j}{\nu}(r)^2 \, dr .
\end{equation}
Then we have
\begin{alignat}{2}
	\hoConstwpd{w}{\psi}{d} &={}& 2\pi \sup_{j \in \N} \sup_{k \in \N} {}&\psi(4j+2k+d, (k+d/2-1)^2)^2 \Tjnuf{j}{k+d/2-1}{w} , 
	\label{eq:ho optimal constant C} \\
	\hoconstwpd{w}{\psi}{d} &={}&2\pi \inf_{j \in \N} \inf_{k \in \N} {}&\psi(4j+2k+d, (k+d/2-1)^2)^2 \Tjnuf{j}{k+d/2-1}{w} .
	\label{eq:ho optimal constant c}
\end{alignat}
Furthermore, $u_0 \in L^2(\R^d) \setminus \{0\}$ is an extremizer of \eqref{eq:smoothing estimate general}
if and only if 
$\fjkn{u_0}{j}{k}{n} = 0$
for every $(j, k, n)$ such that
\begin{equation}
2\pi \psi(4j+2k+d, (k+d/2-1)^2)^2 \Tjnuf{j}{k+d/2-1}{w} < \hoConstwpd{w}{\psi}{d} .
\end{equation}
Similarly, 
$u_0 \in L^2(\R^d) \setminus \{0\}$ is an extremizer of \eqref{eq:smoothing estimate general reverse}
if and only if 
$\fjkn{u_0}{j}{k}{n} = 0$
for every $(j, k, n)$ such that
\begin{equation}
2\pi \psi(4j+2k+d, (k+d/2-1)^2)^2 \Tjnuf{j}{k+d/2-1}{w} > \hoconstwpd{w}{\psi}{d} .
\end{equation}
\end{theorem}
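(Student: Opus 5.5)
The plan is to diagonalize the quadratic form on the left-hand side of \eqref{eq:smoothing estimate general} using the basis of Proposition \ref{prop:decomposition}. After that, the sharp constants are the supremum and infimum of a family of nonnegative weights, and the extremizers are read off directly.

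Write $u_0(r\theta) = r^{-(d-1)/2}\sum_{j,k,n} a_{jkn}\LaguerrecalLjnu{j}{k+d/2-1}(r)\Ykn{k}{n}(\theta)$ with $a_{jkn}=\fjkn{u_0}{j}{k}{n}$. By definition of the functional calculus,
\begin{equation}
\psi(\Hamiltonian,-\LambdaS)e^{-it\Hamiltonian}u_0 = r^{-(d-1)/2}\sum_{j,k,n} e^{-it(4j+2k+d)}\psi_{jk}\,a_{jkn}\LaguerrecalLjnu{j}{k+d/2-1}(r)\Ykn{k}{n}(\theta),
\end{equation}
where $\psi_{jk}=\psi(4j+2k+d,(k+d/2-1)^2)$. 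First I would do this for $u_0$ with finitely many nonzero coefficients, so that all sums are finite and every manipulation is legitimate. Writing $dx=r^{d-1}\,dr\,d\sigma$, the factor $r^{-(d-1)}$ from $\abs{\cdot}^2$ cancels the Jacobian. Orthonormality of $\{\Ykn{k}{n}\}$ in $L^2(\S^{d-1})$ then kills all cross terms with $(k,n)\neq(k',n')$. This leaves
\begin{equation}
\sum_{k,n}\int_0^{2\pi}\!\!\int_0^\infty w(r)\Bigl|\sum_j e^{-4ijt}\psi_{jk}a_{jkn}\LaguerrecalLjnu{j}{k+d/2-1}(r)\Bigr|^2\,dr\,dt.
\end{equation}

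The key step is the time integral. For fixed $k$, the remaining phases $e^{-4ijt}$ with distinct $j$ are orthogonal on $[0,2\pi]$, because $\int_0^{2\pi}e^{-4i(j-j')t}\,dt=2\pi\delta_{jj'}$. (The factor $e^{-it(2k+d)}$ is common to all terms and has modulus one.) Since $w\ge0$ and everything is a finite sum, Fubini applies. Hence the left-hand side equals
\begin{equation}
2\pi\sum_{j,k,n}\psi_{jk}^2\,\Tjnuf{j}{k+d/2-1}{w}\,\abs{a_{jkn}}^2 .
\end{equation}
This is exactly why the orthogonality in the weighted space, which fails for the Hermite basis, is not needed: the time average removes the $j\neq j'$ interactions, and the angular integral removes the $(k,n)$ interactions.

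Set $\mu_{jk}=2\pi\psi_{jk}^2\Tjnuf{j}{k+d/2-1}{w}\in[0,\infty]$. The form is $\sum\mu_{jk}\abs{a_{jkn}}^2$ with $\norm{u_0}^2=\sum\abs{a_{jkn}}^2$ by Proposition \ref{prop:decomposition}. Hence the best $C$ is $\sup\mu_{jk}$, which is attained in the limit by taking a single basis element, and likewise the best $c$ is $\inf\mu_{jk}$.

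The main technical obstacle is passing from finitely supported data to general $u_0\in L^2$. This matters especially when the supremum is infinite or the form is only defined as an unbounded expression. For the lower bound, and for the identity in general, I would use Fatou's lemma or monotone convergence. Let $P_M$ be the projection onto indices with $j,k\le M$. If $\sup\mu<\infty$, the map $u_0\mapsto \sqrt{w}\,\psi(\Hamiltonian,-\LambdaS)e^{-it\Hamiltonian}u_0$ is bounded from $L^2(\R^d)$ to $L^2(\R^d\times[0,2\pi])$ on the dense set of such data. It therefore extends, and the diagonal identity persists by continuity. If the sup is infinite, the identity already shows that no finite $C$ works. For the reverse inequality, lower semicontinuity via Fatou along $P_Mu_0\to u_0$ gives the inequality in the correct direction; this is the point I would check most carefully.

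Once the identity holds for all $u_0$, the extremizer characterization follows immediately. Equality $\sum\mu_{jk}\abs{a_{jkn}}^2=C\sum\abs{a_{jkn}}^2$ with $\mu_{jk}\le C$ forces $a_{jkn}=0$ whenever $\mu_{jk}<C$, and conversely. The reverse inequality is handled symmetrically.
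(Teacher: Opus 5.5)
Your argument is the paper's. You expand in the joint Laguerre--spherical-harmonic eigenbasis and use orthogonality of the spherical harmonics on $\S^{d-1}$ and of the phases on $[0,2\pi]$ to diagonalize the form as $\sum_{j,k,n}\mu_{jk}\abs{a_{jkn}}^2$. The paper applies these two steps in the opposite order: it integrates in time first, grouping terms by $N=2j+k$, and then uses angular orthogonality within each eigenspace, where $k$ determines $j$. It also carries out the computation formally for all $u_0\in L^2(\R^d)$, with no density step.

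One correction to your added rigor. Write $F$ and $F_M$ for $\psi(\Hamiltonian,-\LambdaS)e^{-it\Hamiltonian}$ applied to $u_0$ and $P_Mu_0$. Fatou then gives $\int w\abs{F}^2\le\liminf_M\int w\abs{F_M}^2$, which bounds the left-hand side from \emph{above}. That serves \eqref{eq:smoothing estimate general}, not the reverse inequality \eqref{eq:smoothing estimate general reverse}. Your continuity argument already gives the identity when $\sup\mu_{jk}<\infty$, so this only matters when $\sup\mu_{jk}=\infty$. In that case you need a Bessel-type lower bound instead: for a.e.\ fixed $r$, use the orthonormal system $(2\pi)^{-1/2}e^{-it(4j+2k+d)}Y_k^n(\theta)$ in $L^2(\S^{d-1}\times[0,2\pi])$.
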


\begin{proof}[Proof of Theorem \ref{thm:optimal constant}]
Throughout the proof, we use polar coordinates $x = r \theta$, $(r, \theta) \in \intervaloo{0}{\infty} \times \S^{d-1}$.
Fix $u_0 \in L^2(\R^d)$. By Proposition \ref{prop:decomposition}, we have
\begin{align}
&\quad \psi(\Hamiltonian, -\LambdaS) e^{- i t \Hamiltonian} u_0(x) \\
&= r^{-(d-1)/2} \sum_{N \in \N} \sumjk \!\!\!\!\!\! \sumn \psi(2N+d, (k+d/2-1)^2) e^{- i (2N+d) t} \fjkn{u_0}{j}{k}{n} \LaguerrecalLjnu{j}{k + d/2 - 1}(r) \Ykn{k}{n}(\theta) .
\end{align}
Then we get
\begin{align}
&\quad \int_{t \in \intervalcc{0}{2\pi}} \abs{ \psi(\Hamiltonian, -\LambdaS) e^{- i t \Hamiltonian} u_0(x) }^2 \, dt \\
&= 2\pi r^{-(d-1)} \sum_{N \in \N} \Abs{ \sumjk \!\!\!\!\!\! \sumn \psi(2N+d, (k+d/2-1)^2) \fjkn{u_0}{j}{k}{n} \LaguerrecalLjnu{j}{k+d/2-1}(r) \Ykn{k}{n}(\theta) }^2 ,
\end{align}
since $\{ e^{-i (2N+d) t} / \sqrt{2\pi} \}_{N \in \N}$ is an orthonormal system in $L^2(\intervalcc{0}{2\pi})$.
Furthermore, we also have
\begin{align}
&\quad \int_{\theta \in \S^{d-1}} \Abs{ \sumjk \!\!\!\!\!\! \sumn \psi(2N+d,(k+d/2-1)^2) \fjkn{u_0}{j}{k}{n} \LaguerrecalLjnu{j}{k+d/2-1}(r) \Ykn{k}{n}(\theta) }^2 \, d\sigma_{\S^{d-1}}(\theta) \\
& = \sumjk \!\!\!\!\!\! \sumn \psi(2N+d, (k+d/2-1)^2)^2 \abs{\fjkn{u_0}{j}{k}{n}}^2 \LaguerrecalLjnu{j}{k+d/2-1}(r)^2
\end{align}
for each $N \in \N$, since $\{ \Ykn{k}{n} \}_{k, n}$ is an orthonormal system in $L^2(\S^{d-1})$.
Combining these, we get
\begin{align}
&\quad \int_{(x, t) \in \R^d \times \intervalcc{0}{2\pi}} w(\abs{x}) \abs{ \psi(\Hamiltonian, - \LambdaS) e^{- i t \Hamiltonian} u_0(x) }^2 \, dx \, dt \\
&= 2\pi \sum_{N \in \N} \sumjk \!\!\!\!\!\! \sumn \psi(2N+d, (k+d/2-1)^2)^2 \abs{\fjkn{u_0}{j}{k}{n}}^2 \int_{r \in \intervaloo{0}{\infty}} w(r) \LaguerrecalLjnu{j}{k+d/2-1}(r)^2 \, dr \\
&= 2\pi \sumjkn \psi(4j+2k+d, (k+d/2-1)^2)^2 \Tjnuf{j}{k+d/2 - 1}{w} \abs{\fjkn{u_0}{j}{k}{n}}^2 .
\label{eq:LHS}
\end{align}
Now Theorem \ref{thm:optimal constant} follows from this identity \eqref{eq:LHS} and $\norm{u_0}_{L^2(\R^d)} = \norm{\widehat{u_0}}_{\ell^2}$. 
\end{proof}
\section{Proofs of Theorems \ref{main thm:type B HO} and \ref{main thm:type B identity HO}} \label{section:proof type B}
Theorems \ref{main thm:type B HO} and \ref{main thm:type B identity HO} easily follow from Theorem \ref{thm:optimal constant}.
\begin{proof}[Proof of Theorems \ref{main thm:type B HO} and \ref{main thm:type B identity HO}]
Let $w(r) = r^{-2}$. Then, for each $\nu \in \intervaloo{0}{\infty}$, we have
\begin{align}
	\Tjnuf{j}{\nu}{w}
	&= \frac{2\Gamma(j+1)}{\Gamma(j+\nu+1)} \int_{r \in \intervaloo{0}{\infty}} \exp( -r^2 ) r^{2\nu-1} \LaguerreL{j}{\nu}(r^2)^2 \, dr \\
	&= \frac{\Gamma(j+1)}{\Gamma(j+\nu+1)} \int_{r \in \intervaloo{0}{\infty}} \exp( -r ) r^{\nu-1} \LaguerreL{j}{\nu}(r)^2 \, dr \\
	\underset{\text{\cite[2.19.14.17]{PBM1988}}}&= %\frac{\Gamma(j+1)}{\Gamma(j+\nu+1)} \frac{\Gamma(j+\nu+1) \Gamma(\nu)}{\Gamma(\nu+1) \Gamma(j+1)} \\&= 
	\frac{1}{\nu} .
\end{align}
Therefore, the optimal constant of the inequality \eqref{eq:type B HO} is given by
\begin{equation}
2\pi \sup_{j \in \N} \sup_{k \in \N} \Tjnuf{j}{k+d/2 - 1}{w} = \sup_{j \in \N} \sup_{k \in \N} \frac{4\pi}{2k+d-2} = \frac{4\pi}{d-2} .
\end{equation}
Moreover, $u_0 \in L^2(\R^d) \setminus \{0\}$ is an extremizer if and only if it is radially symmetric, since the supremum above is attained if and only if $k = 0$.
This proves Theorem \ref{main thm:type B HO}.
Similarly, Theorem \ref{main thm:type B identity HO} holds because we have
\begin{equation}
2\pi (k+d/2-1) \Tjnuf{j}{k+d/2 - 1}{w} = 2 \pi 
\end{equation}
for every $(j, k) \in \N^2$.
\end{proof}
\section{Proof of Theorem \ref{main thm:type C HO general}} \label{section:proof type C}
In this section, we prove Theorem \ref{main thm:type C HO general}. We note that our argument here is inspired by \citet{Suz2025_2}, which studies estimates for free particles associated with a weight $w$ such that $r \longmapsto w(r^{1/2})$ is completely monotone.
We first prove the following Proposition \ref{prop:asymptotic behavior}, which gives a lower bound for the optimal constant of the inequality \eqref{eq:type C HO general}.
\begin{proposition} \label{prop:asymptotic behavior}
Let $\nu \in \intervalco{-1/2}{\infty}$ and $f \in L^1(\intervaloo{0}{\infty})$. Then the following hold.
\begin{enumerate}[label=\textup{(\roman*)}]
\item \label{item:asymptotic behavior for compactly supported f} 
If there exists a compact set $K \subset \intervaloo{0}{\infty}$ such that $\supp f \subset K$, then
\begin{equation} \label{eq:asymptotic behavior for compactly supported f} 
	\lim_{j \to \infty} (4j+2\nu+2)^{1/2} \Tjnuf{j}{\nu}{f} = \frac{2}{\pi} \int_{0}^{\infty} f(r) \, dr .
\end{equation}
\item \label{item:asymptotic behavior for non-negative f} 
 If $f$ is non-negative, then 
\begin{equation} \label{eq:asymptotic behavior for non-negative f} 
	\liminf_{j \to \infty} (4j+2\nu+2)^{1/2} \Tjnuf{j}{\nu}{f} \geq \frac{2}{\pi} \int_{0}^{\infty} f(r) \, dr .
\end{equation}
\end{enumerate}
\end{proposition}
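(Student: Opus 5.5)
We will prove \ref{item:asymptotic behavior for compactly supported f} from a uniform asymptotic formula for $\LaguerrecalLjnu{j}{\nu}(r)^2$ on compact subsets of $\intervaloo{0}{\infty}$, followed by a Riemann--Lebesgue argument. We then deduce \ref{item:asymptotic behavior for non-negative f} from \ref{item:asymptotic behavior for compactly supported f} by truncation. Throughout we write $M_j \coloneqq 4j+2\nu+2$.

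\emph{Step 1 (asymptotics).} We use the Hilb-type formula for Laguerre polynomials (\citet[Theorem 8.22.4]{Sze1975}). With $n_j = j + (\nu+1)/2$, so that $4n_j = M_j$, it gives
\begin{equation}
e^{-x/2} x^{\nu/2} \LaguerreL{j}{\nu}(x) = n_j^{-\nu/2} \frac{\Gamma(j+\nu+1)}{\Gamma(j+1)} J_\nu\bigl(2 (n_j x)^{1/2}\bigr) + O(j^{\nu/2 - 3/4})
\end{equation}
uniformly for $x$ in a compact subset of $\intervaloo{0}{\infty}$. We substitute $x = r^2$ into the definition \eqref{eq:Laguerre functions}. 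Since $\Gamma(j+\nu+1)/\Gamma(j+1) \sim n_j^{\nu}$, this yields, uniformly for $r \in K$,
\begin{equation}
\LaguerrecalLjnu{j}{\nu}(r)^2 = 2 r J_\nu(M_j^{1/2} r)^2 (1+o(1)) + o(M_j^{-1/2}).
\end{equation}
Next we insert the large-argument asymptotics
\begin{equation}
J_\nu(z) = (2/(\pi z))^{1/2}\bigl(\cos(z - \nu\pi/2 - \pi/4) + O(z^{-1})\bigr),
\end{equation}
which is uniform for $z \ge M_j^{1/2}\min K \to \infty$. This gives
\begin{equation}
M_j^{1/2} \LaguerrecalLjnu{j}{\nu}(r)^2 = \frac{4}{\pi} \cos^2\bigl(M_j^{1/2} r - \nu\pi/2 - \pi/4\bigr) + o(1)
\end{equation}
uniformly on $K$.

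\emph{Step 2 (averaging).} We multiply by $f$, which is integrable with support in $K$, so the $o(1)$ term contributes $o(1)\norm{f}_{L^1}$. We then write $\cos^2 \alpha = (1+\cos 2\alpha)/2$. By the Riemann--Lebesgue lemma, $\int f(r) \cos(2M_j^{1/2} r - \nu\pi - \pi/2)\,dr \to 0$ as $M_j^{1/2}\to\infty$. What remains is $\frac{4}{\pi}\cdot\frac12\int f = \frac{2}{\pi}\int_0^\infty f$, which is \eqref{eq:asymptotic behavior for compactly supported f}.

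\emph{Step 3 (non-negative $f$).} For $0<a<b<\infty$, put $f_{a,b} \coloneqq f\indicator{\intervalcc{a}{b}}$. Since $f\ge0$ and $\LaguerrecalLjnu{j}{\nu}(r)^2\ge0$, we have $\Tjnuf{j}{\nu}{f} \ge \Tjnuf{j}{\nu}{f_{a,b}}$. Applying \ref{item:asymptotic behavior for compactly supported f} to $f_{a,b}$ gives
\begin{equation}
\liminf_{j\to\infty} M_j^{1/2}\Tjnuf{j}{\nu}{f} \ge \frac{2}{\pi}\int_a^b f .
\end{equation}
Letting $a\downarrow0$ and $b\uparrow\infty$ and using monotone convergence gives \eqref{eq:asymptotic behavior for non-negative f}.

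The main obstacle is Step 1: we must make sure that the error terms in both the Hilb-type formula and the Bessel asymptotics are uniform in $r\in K$. We must also check that the normalization constants combine to exactly $4/\pi$; this uses $\Gamma(j+\nu+1)/\Gamma(j+1)\sim n_j^\nu$. If the Hilb form of the error is awkward to control, the fallback is to use the Fejér/Plancherel--Rotach oscillatory asymptotic for $\LaguerreL{j}{\nu}$ in the oscillatory region directly. The hypothesis $\nu\ge-1/2$ is where these classical uniform estimates are cleanly available.
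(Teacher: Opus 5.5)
Your proof is correct and follows essentially the same route as the paper: a uniform oscillatory asymptotic $M_j^{1/2}\LaguerrecalLjnu{j}{\nu}(r)^2 = \frac{4}{\pi}\cos^2(\cdots) + o(1)$ on $K$ followed by Riemann--Lebesgue for (i), and truncation to a compact set using $f \geq 0$ for (ii). The only difference is in the asymptotic input. The paper obtains the cosine form in one step from Fej\'er's formula (Szeg\H{o}, Theorem 8.22.1), whereas you go through the Hilb-type formula and then the large-argument Bessel asymptotics; both work for every $\nu > -1$, so the restriction $\nu \geq -1/2$ plays no role in either argument.
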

In order to prove this, we use the following asymptotic formula for the Laguerre polynomials.
\begin{proposition}[{\cite[Theorem 8.22.1]{Sze1975}}]
	Let $\nu \in \intervaloo{-1}{\infty}$, and let $K \subset \intervaloo{0}{\infty}$ be a compact set. Then
	\begin{equation} \label{eq:asymptotic behavior for Laguerre polynomial}
		\LaguerreL{j}{\nu}(r) = \frac{j^{\nu/2-1/4} e^{r/2}}{\pi^{1/2} r^{\nu/2+1/4}} \cos{ (2(jr)^{1/2} - (\nu/2+1/4)\pi )} + O(j^{\nu/2-3/4})
	\end{equation}
	holds uniformly on $K$ as $j \to \infty$. 
\end{proposition}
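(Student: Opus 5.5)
We take the classical route behind Fejér's formula: first compare $\LaguerreL{j}{\nu}$ with a Bessel function through the differential equation (a Hilb-type formula), and then insert the large-argument asymptotics of the Bessel function. We write $N \coloneqq j + (\nu+1)/2$ throughout.

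\emph{Step 1 (differential equation).} The function $y = \LaguerreL{j}{\nu}$ satisfies $r y'' + (\nu+1-r) y' + j y = 0$. We set $u(r) \coloneqq e^{-r/2} r^{(\nu+1)/2} \LaguerreL{j}{\nu}(r)$. A direct computation removes the first-order term:
\begin{equation}
u'' + \mleft( \frac{N}{r} - \frac14 + \frac{1-\nu^2}{4r^2} \mright) u = 0 .
\end{equation}
The comparison function is $v(r) \coloneqq r^{1/2} J_\nu(2(Nr)^{1/2})$. It satisfies exactly $v'' + \mleft( N/r + (1-\nu^2)/(4r^2) \mright) v = 0$, so the two equations differ only by the bounded term $-u/4$.

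\emph{Step 2 (Volterra equation and Hilb formula).} Both $u$ and $v$ behave like $r^{(\nu+1)/2}$ at $0$. Matching the leading constant, $\LaguerreL{j}{\nu}(0) = \Gamma(j+\nu+1)/(\Gamma(\nu+1)\Gamma(j+1))$, gives $u = c_j v + \text{error}$ with
\begin{equation}
c_j = \Gamma(j+\nu+1) N^{-\nu/2} / \Gamma(j+1).
\end{equation}
Variation of parameters with the fundamental pair $r^{1/2} J_\nu(2\sqrt{Nr})$, $r^{1/2} Y_\nu(2\sqrt{Nr})$ turns this into the Volterra integral equation
\begin{equation}
u(r) = c_j v(r) + \frac{\pi}{4N^{0}} \int_0^r \bigl[ \text{Bessel kernel in } (r,s) \bigr] \, \frac{u(s)}{4} \, ds .
\end{equation}
The kernel is bounded by $O(N^{-1/2}(rs)^{1/4})$ away from the origin, and by the usual power bounds near $s=0$. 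A Gronwall argument then gives, uniformly for $r$ in the compact set $K$,
\begin{equation}
u(r) = c_j v(r) + O(N^{\nu/2 - 3/4}).
\end{equation}
This is the Hilb-type formula. It is the main obstacle of the proof: the kernel estimates must be uniform both near $s = 0$, where $J_\nu$ and $Y_\nu$ have their small-argument behaviour, and on the oscillatory range. We split the integral at $s \sim 1/N$, using small-argument bounds on the first piece and $|J_\nu(z)|, |Y_\nu(z)| \lesssim z^{-1/2}$ on the second.

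\emph{Step 3 (insert Bessel asymptotics).} On $K$ the argument $z = 2\sqrt{Nr}$ tends to infinity, so
\begin{equation}
J_\nu(z) = (2/(\pi z))^{1/2} \cos(z - \nu\pi/2 - \pi/4) + O(z^{-3/2}).
\end{equation}
We also use $\Gamma(j+\nu+1)/\Gamma(j+1) = j^\nu(1+O(1/j))$ and $N^{a} = j^{a}(1+O(1/j))$. Substituting these into $\LaguerreL{j}{\nu}(r) = e^{r/2} r^{-(\nu+1)/2} u(r)$, the main term becomes
\begin{equation}
\frac{j^{\nu/2-1/4} e^{r/2}}{\pi^{1/2} r^{\nu/2+1/4}} \cos\bigl( 2(Nr)^{1/2} - (\nu/2+1/4)\pi \bigr).
\end{equation}
All the other contributions are $O(j^{\nu/2-3/4})$ uniformly on $K$, since $e^{r/2}$ and the powers of $r$ are bounded there.

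\emph{Step 4 (replace $N$ by $j$ in the phase).} Finally, $2\sqrt{r}\,(\sqrt{N}-\sqrt{j}) = O(j^{-1/2})$ uniformly on $K$. Because the cosine is $1$-Lipschitz, replacing $N$ by $j$ in the phase changes the main term by $O(j^{\nu/2-1/4} \cdot j^{-1/2})$, which is of the allowed error size. This yields \eqref{eq:asymptotic behavior for Laguerre polynomial}.
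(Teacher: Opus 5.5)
The paper gives no proof of this statement; it imports Fej\'er's formula from \cite[Theorem 8.22.1]{Sze1975}, where it is stated for every real order. Your proposal is a correct, self-contained proof along the classical Hilb-type route, and the following check out: the Liouville normal form with $N = j + (\nu+1)/2$, the comparison function $r^{1/2}J_\nu(2\sqrt{Nr})$, the normalisation $c_j$, the main term after inserting the Bessel asymptotics, and the $O(j^{-1/2})$ phase correction in Step 4.

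Two points in Step 2 need tidying.

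\emph{The Volterra equation.} Your display is garbled: it has a factor $N^{0}$ and a doubled $\frac14$. The Wronskian of $r^{1/2}J_\nu(2\sqrt{Nr})$ and $r^{1/2}Y_\nu(2\sqrt{Nr})$ is $1/\pi$, so the equation is
\[
u(r) = c_j v(r) + \frac{\pi}{4}\int_0^r \sqrt{rs}\,\bigl[J_\nu(2\sqrt{Ns})\,Y_\nu(2\sqrt{Nr}) - J_\nu(2\sqrt{Nr})\,Y_\nu(2\sqrt{Ns})\bigr]\,u(s)\,ds .
\]
You should also say why $u$ satisfies it with no extra homogeneous solution. Both $u - c_j v$ and the integral term are $O(r^{(\nu+3)/2})$ at $0$, because $u$ and $v$ are $r^{(\nu+1)/2}$ times functions analytic in $r$ with matched constant terms. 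Their difference is therefore a homogeneous solution of that size. Every nonzero homogeneous solution has exact order $r^{(1+\nu)/2}$, $r^{(1-\nu)/2}$ or $r^{1/2}\abs{\log r}$ at $0$, and all of these dominate $r^{(\nu+3)/2}$ precisely because $\nu>-1$. Hence the difference is zero.

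\emph{The ``Gronwall'' step.} This is cleanest as a Neumann-series bound. Put $\omega = \max K$, and let $\rho(s) = j^{\nu}s^{(\nu+1)/2}$ for $s \le 1/j$ and $\rho(s) = j^{\nu/2-1/4}s^{1/4}$ for $s \ge 1/j$. Then:
\begin{itemize}
\item one has $\abs{c_j v} \lesssim \rho$;
\item the operator with the absolute value of the above kernel maps $\rho$ into $O(j^{-1/2})\rho$ on $(0,\omega]$, and the part with $s \le 1/j$ contributes only $O(j^{-2})\rho$, so the near-origin region you flagged as the main obstacle is harmless.
\end{itemize}
Together these give the a priori bound $\abs{u}\lesssim\rho$, and then the error $O(j^{-1/2}\rho(r)) = O(j^{\nu/2-3/4})$ on $K$ in one stroke.

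As for what each route buys: the citation costs nothing and covers all real $\nu$. The ODE method is intrinsically limited to $\nu>-1$, since it needs the Volterra integral to converge at $s=0$ and the matching at the origin to work. That is exactly the stated range, and more than the paper needs, since Proposition \ref{prop:asymptotic behavior} only uses $\nu \ge -1/2$. In exchange, your argument is self-contained and proves more than the statement: the Hilb-type bound $\abs{u - c_j v} \lesssim j^{-1/2}\rho$ holds uniformly on all of $(0,\omega]$, down to the transition region $r \sim 1/j$, not only on compact subsets of $(0,\infty)$.
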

\begin{proof}[Proof of Proposition \ref{prop:asymptotic behavior}]
	First, we prove \ref{item:asymptotic behavior for compactly supported f}. 
	Suppose that $f \in L^1(\intervaloo{0}{\infty})$ has a compact support $K$.
Then, combining $\Gamma(j+1)/\Gamma(j+\nu+1) \sim j^{-\nu}$ and the asymptotic formula \eqref{eq:asymptotic behavior for Laguerre polynomial}, we see that
	\begin{equation} \label{eq:asymptotic behavior for Laguerre function}
	(4j+2\nu+2)^{1/2} \LaguerrecalLjnu{j}{\nu}(r)^2 = \frac{2}{\pi} \sin{(4j^{1/2} r -\nu\pi)} + \frac{2}{\pi} + O(j^{-1/2})
\end{equation}
holds uniformly on $K$ as $j \to \infty$. Therefore, we have
\begin{equation}
(4j+2\nu+2)^{1/2} \Tjnuf{j}{\nu}{f} = \frac{2}{\pi} \int_{r \in K} f(r) \sin{(4j^{1/2} r -\nu\pi)} \, dr + \frac{2}{\pi} \int_{r \in K} f(r) \, dr + O(j^{-1/2})
\end{equation}
as $j \to \infty$.
Using the Riemann--Lebesgue lemma, we obtain
\begin{equation}
\lim_{j \to \infty} (4j+2\nu+2)^{1/2} \Tjnuf{j}{\nu}{f} = \frac{2}{\pi} \int_{r \in K} f(r) \, dr ,
\end{equation}
as desired. 

Next, we prove \ref{item:asymptotic behavior for non-negative f}. 
Suppose that $f \in L^1(\intervaloo{0}{\infty})$ is non-negative and
fix $\varepsilon > 0$ arbitrarily. 
Let $K \subset \intervaloo{0}{\infty}$ be a compact set such that
\begin{equation}
\frac{2}{\pi} \int_{r \in K} f(r) \, dr > \frac{2}{\pi} \int_{r \in \intervaloo{0}{\infty}} f(r) \, dr - \varepsilon / 2 ,
\end{equation}
and let $j_0 \in \N$ be such that 
\begin{equation}
(4j+2\nu+2)^{1/2} \int_{r \in K} f(r) \LaguerrecalLjnu{j}{\nu}(r)^2 \, dr > \frac{2}{\pi} \int_{r \in K} f(r) \, dr - \varepsilon / 2
\end{equation}
holds for every $j \in \N_{\geq j_0}$, which exists thanks to \ref{item:asymptotic behavior for compactly supported f}.
Then we have
\begin{align}
(4j+2\nu+2)^{1/2} \int_{r \in \intervaloo{0}{\infty}} f(r) \LaguerrecalLjnu{j}{\nu}(r)^2 \, dr
&\geq (4j+2\nu+2)^{1/2} \int_{r \in K} f(r) \LaguerrecalLjnu{j}{\nu}(r)^2 \, dr \\
&> \frac{2}{\pi} \int_{r \in K} f(r) \, dr - \varepsilon / 2  \\
&> \frac{2}{\pi} \int_{r \in \intervaloo{0}{\infty}} f(r) \, dr - \varepsilon
\end{align}
for every $j \in \N_{\geq j_0}$. This shows \ref{item:asymptotic behavior for non-negative f}.
\end{proof}
Combining Theorem \ref{thm:optimal constant} and Proposition \ref{prop:asymptotic behavior}, we see that the optimal constant of the inequality \eqref{eq:type C HO general} is greater than or equal to $4\norm{w}_{L^1(\intervaloo{0}{\infty})}$ for every $d \geq 2$ and non-negative $w \in L^1(\intervaloo{0}{\infty})$.
In particular, as conjectured by \citet{BR2011}, the inequality \eqref{eq:type C HO} fails when $p \leq 1$.
More precisely, this follows by applying the lower bound to the truncated weights
$(1+r^2)^{-p/2} \indicator{\intervalco{0}{R}}$ and letting $R \to \infty$.

Now we consider upper bounds. In order to establish Theorem \ref{main thm:type C HO general}, it suffices to show the following thanks to Theorem \ref{thm:optimal constant}.
\begin{theorem} \label{thm:monotonicity for Stieltjes weights}
Let $f \in L^1(\intervaloo{0}{\infty}) \setminus \{0\}$ be such that 
\begin{equation}
\intervaloo{0}{\infty} \ni r \longmapsto f(r^{1/2})
\end{equation}
is a Stieltjes function.
Then, for each $\nu \in \intervalco{1/2}{\infty}$,
\begin{equation}
\N \ni j \longmapsto (4j+2\nu+2)^{1/2} \Tjnuf{j}{\nu}{f} 
\end{equation}
is strictly increasing and converges to $\frac{2}{\pi} \norm{f}_{L^1(\intervaloo{0}{\infty})}$ as $j \to \infty$.
\end{theorem}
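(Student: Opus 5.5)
Since $r\mapsto f(r^{1/2})$ is Stieltjes, \ref{item:Stieltjes 1} gives a Borel measure $\lambda$ on $\intervalco{0}{\infty}$ with $f(r)=\int (a+r^2)^{-1}\,d\lambda(a)$. The integrability $f\in L^1$ forces $\lambda(\{0\})=0$ (since $r^{-2}\notin L^1$ near $0$) and $\int a^{-1/2}\,d\lambda(a)<\infty$, because $\int_0^\infty (a+r^2)^{-1}dr=\frac{\pi}{2}a^{-1/2}$. By Tonelli, $\Tjnuf{j}{\nu}{f}=\int \Tjnuf{j}{\nu}{g_a}\,d\lambda(a)$ with $g_a(r)=(a+r^2)^{-1}$. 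Scaling shows $\frac{2}{\pi}\norm{f}_{L^1}=\int a^{-1/2}\,d\lambda(a)$. So it suffices to prove the following for each fixed $a>0$:
\begin{equation}
j\mapsto (4j+2\nu+2)^{1/2}\,\Tjnuf{j}{\nu}{g_a}\ \text{ is strictly increasing with limit } a^{-1/2}.
\end{equation}
Strict monotonicity then passes to $f$ because $\lambda\neq0$. For the convergence, monotone convergence applies to the integral over $a$, since the integrands increase to $a^{-1/2}$, which is $\lambda$-integrable. The limit $a^{-1/2}$ for each $a$ follows from Proposition \ref{prop:asymptotic behavior}\ref{item:asymptotic behavior for non-negative f}, which gives the lower bound, combined with monotonicity and an upper bound $\le a^{-1/2}$. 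Alternatively one can get it directly from an explicit formula.

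To get such a formula, I would substitute $s=r^2$ and write
\begin{equation}
\Tjnuf{j}{\nu}{g_a}=\frac{\Gamma(j+1)}{\Gamma(j+\nu+1)}\int_0^\infty \frac{e^{-s}s^{\nu}\LaguerreL{j}{\nu}(s)^2}{a+s}\,ds .
\end{equation}
Then I would insert $\frac{1}{a+s}=\int_0^\infty e^{-(a+s)\rho}\,d\rho$ and use the classical Laplace transform of $e^{-s}s^\nu\LaguerreL{j}{\nu}(s)^2$, a Jacobi-polynomial/Legendre-type expression in $(1+\rho)^{-1}$; cf.\ \cite{PBM1988}. This reduces $\Tjnuf{j}{\nu}{g_a}$ to an integral $\int_0^\infty e^{-a\rho}K_{j,\nu}(\rho)\,d\rho$. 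The claim becomes an inequality of the form
\begin{equation}
\int_0^\infty e^{-a\rho}\bigl(c_{j+1}K_{j+1,\nu}(\rho)-c_jK_{j,\nu}(\rho)\bigr)\,d\rho>0,\qquad c_j=(4j+2\nu+2)^{1/2}.
\end{equation}
A sufficient condition is that the bracket is nonnegative and not identically zero for all $\rho$. This gives a single-variable inequality in $\rho$, independent of $a$. Since this is required for every $a>0$, by the uniqueness of Laplace transforms it is essentially the natural sufficient condition.

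An alternative route, which I would try in parallel, is a recurrence. Write $s\,\LaguerreL{j}{\nu}$ via the three-term recurrence, and use the identity $\frac{s}{a+s}=1-\frac{a}{a+s}$. Together with the normalisation, this relates $\Tjnuf{j}{\nu}{g_a}$, $\Tjnuf{j\pm1}{\nu}{g_a}$ and an off-diagonal term $\int g_a\,\LaguerrecalLjnu{j}{\nu}\LaguerrecalLjnu{j+1}{\nu}$. That could yield a Riccati-type recursion for the ratios, to be combined with a Turán-type inequality.

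The main obstacle is the positivity step: showing $c_{j+1}K_{j+1,\nu}\ge c_jK_{j,\nu}$, or an equivalent ratio monotonicity, uniformly in $\rho$. This is where the restriction $\nu\ge1/2$ must enter. My first attempt would be to express $K_{j,\nu}$ through Legendre functions $P_{j+\nu/2}$-type expressions and use known monotonicity results in the degree, such as Szegő-type inequalities for $\sqrt{n}\,P_n(\cos\theta)^2$. Failing that, I would try to show that the difference has a completely monotone representation.
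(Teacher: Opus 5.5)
Your reduction to the single weights $g_a=\weightphia{a}$ is correct and is exactly the paper's reduction. That is: the representation via \ref{item:Stieltjes 1}, Tonelli, the identity $\frac{2}{\pi}\norm{f}_{L^1(\intervaloo{0}{\infty})}=\int a^{-1/2}\,d\lambda(a)$, transfer of strict monotonicity because $\lambda\neq0$, and monotone convergence.

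\textbf{The gap.} The per-$a$ statement is the entire content of the theorem, and you do not prove it. Worse, the sufficient condition you propose for it is false. With your normalisation, $K_{0,\nu}(\rho)=(1+\rho)^{-\nu-1}$. Since $\LaguerreL{1}{\nu}(s)=\nu+1-s$, you get
\begin{equation}
K_{1,\nu}(\rho)=(1+\rho)^{-\nu-3}\bigl((\nu+1)\rho^2+1\bigr),
\qquad
\frac{c_1K_{1,\nu}(1/(\nu+1))}{c_0K_{0,\nu}(1/(\nu+1))}=\frac{\sqrt{(\nu+1)(\nu+3)}}{\nu+2}<1
\end{equation}
for every $\nu>-1$, in particular for $\nu\ge1/2$. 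So $c_1K_{1,\nu}-c_0K_{0,\nu}$ is positive at $\rho=0$ but negative at $\rho=1/(\nu+1)$. Yet its Laplace transform $c_1\Tjnuf{1}{\nu}{g_a}-c_0\Tjnuf{0}{\nu}{g_a}$ is positive for every $a>0$ when $\nu\ge1/2$, by the theorem itself.

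Your appeal to uniqueness of Laplace transforms is therefore mistaken. Positivity for all $a$ is much weaker than pointwise positivity of the kernel; by Bernstein's theorem, the latter is equivalent to complete monotonicity in $a$ of the difference. Consequently:
\begin{itemize}
\item the Legendre/Szeg\H{o} plan targets a false inequality;
\item the ``completely monotone representation'' fallback fails for the same reason;
\item the recurrence/Tur\'an route is too undeveloped to assess.
\end{itemize}
The limit is also not established. Proposition \ref{prop:asymptotic behavior} gives only the $\liminf$, and you do not supply the matching upper bound $(4j+2\nu+2)^{1/2}\Tjnuf{j}{\nu}{g_a}\le a^{-1/2}$.

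\textbf{The missing idea.} Put the Laplace variable on the spectral parameter $\mu=2j+\nu+1$ instead of on $a$. The paper first obtains the closed form $\Tjnuf{j}{\nu}{\weightphia{a}}=\Productmna{2j+\nu+1}{\nu}{a}$, which is (up to a Gamma factor) the product $\WhittakerM{-\mu/2}{\nu/2}(a)\WhittakerW{-\mu/2}{\nu/2}(a)$. To get it, write $\LaguerreL{j}{\nu}(r)=(a+r)Q(r)+\LaguerreL{j}{\nu}(-a)$ with $\deg Q=j-1$. Orthogonality then replaces one Laguerre factor by the constant $\LaguerreL{j}{\nu}(-a)$, and the remaining integral is \cite[7.623.1]{GR2014}. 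Next, \cite[6.669.4]{GR2014} with $a=b$ and the substitution $\tanh(t/2)=e^{-s}$ gives
\begin{equation}
(2\mu a)^{1/2}\Productmna{\mu}{\nu}{a}=\pi^{-1/2}\int_0^\infty s^{-1/2}\Fnas{\nu}{a}{s/\mu}\,e^{-s}\,ds,
\qquad
\Fnas{\nu}{a}{s}=(2\pi a s)^{1/2}e^{-a\coth s}\BesselI{\nu}(a\csch s)\csch s .
\end{equation}
This interpolates $j$ continuously. Since $(2\mu a)^{1/2}=a^{1/2}(4j+2\nu+2)^{1/2}$, both strict monotonicity and the limit $a^{-1/2}$ reduce to showing that $s\mapsto\Fnas{\nu}{a}{s}$ is strictly decreasing with $\Fnas{\nu}{a}{0^+}=1$.

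This is where $\nu\ge1/2$ enters. Factor $\BesselI{\nu}=\BesselI{1/2}\cdot(\BesselI{\nu}/\BesselI{1/2})$ with $\BesselI{1/2}$ elementary. Then $F$ becomes a product of three factors:
\begin{itemize}
\item $(s\csch s)^{1/2}$;
\item $e^{-a\tanh(s/2)}-e^{-a\coth(s/2)}$;
\item the Bessel ratio evaluated at the decreasing argument $a\csch s$.
\end{itemize}
The first two are elementary and decreasing. The third is decreasing in $s$ by the Lorch/Hartman--Watson monotonicity of $\BesselI{\nu}/\BesselI{1/2}$.
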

Hereinafter, we write
\begin{equation}
	\weightphia{a}(r) \coloneqq (a + r^2)^{-1} 
\end{equation}
for each $a \in \intervaloo{0}{\infty}$. 
First, we show that $\Tjnuf{j}{\nu}{\weightphia{a}}$ can be represented as a product of the Whittaker functions.
We recall that the Whittaker function of the first kind $\WhittakerM{\mu}{\nu}$ and of the second kind $\WhittakerW{\mu}{\nu}$ are given by
\begin{align}
	\WhittakerM{\mu}{\nu}(r) &\coloneqq e^{-r/2} r^{\nu+1/2} \hypergeometricpFq{1}{1}{\nu-\mu+1/2}{2\nu+1}{r} , 
	\label{eq:WhittakerM}\\
	\WhittakerW{\mu}{\nu}(r) &\coloneqq e^{-r/2} r^{\nu+1/2} \hypergeometricU{\nu-\mu+1/2}{2\nu+1}{r} ,  
	\label{eq:WhittakerW}
\end{align}
respectively, where $\hypergeometricpFqNoArgument{1}{1}$ and $\hypergeometricUNoArgument$ are the confluent hypergeometric functions of the first and second kinds (see \cite[\href{https://dlmf.nist.gov/13\#PT3}{13.14--13.26}]{DLMF} and \cite[9.22--9.23]{GR2014}, for example).
In what follows, we write
	\begin{equation} \label{eq:products of Whittaker functions}
	\Productmna{\mu}{\nu}{a} \coloneqq \frac{ \Gamma(\mu/2 + \nu/2 + 1/2)}{a \Gamma(\nu+1)} \WhittakerM{-\mu/2}{\nu/2}(a) \WhittakerW{-\mu/2}{\nu/2}(a) .
\end{equation}
Then we have the following.
\begin{proposition} \label{prop:explicit formula}
Let $j \in \N$, $\nu \in \intervaloo{-1}{\infty}$, and $a \in \intervaloo{0}{\infty}$.
Then we have
\begin{equation} \label{eq:explicit formula}
	\Tjnuf{j}{\nu}{\weightphia{a}} = \Productmna{2j+\nu+1}{\nu}{a} .
\end{equation}
\end{proposition}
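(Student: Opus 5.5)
The plan is to reduce the left-hand side of \eqref{eq:explicit formula} to a Stieltjes transform of the Laguerre weight. Then I will use the classical ``function of the second kind'' identity for orthogonal polynomials to split it into a Laguerre polynomial evaluated at $-a$ times a Stieltjes transform of a single Laguerre polynomial. Finally, I will identify each factor with a Whittaker function.

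First, by the definition \eqref{eq:Laguerre functions} and the substitution $s = r^2$,
\begin{equation}
\Tjnuf{j}{\nu}{\weightphia{a}} = \frac{\Gamma(j+1)}{\Gamma(j+\nu+1)} \int_{0}^{\infty} \frac{e^{-s} s^{\nu} \LaguerreL{j}{\nu}(s)^2}{s+a} \, ds .
\end{equation}
Next, write $p = \LaguerreL{j}{\nu}$. Since $(p(s) - p(-a))/(s+a)$ is a polynomial in $s$ of degree at most $j-1$, the orthogonality in Proposition \ref{prop:Laguerre decomposition} gives
\begin{equation}
\int_{0}^{\infty} \frac{e^{-s} s^{\nu} p(s)^2}{s+a} \, ds = p(-a) \int_{0}^{\infty} \frac{e^{-s} s^{\nu} p(s)}{s+a} \, ds .
\end{equation}

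For the second factor, I will use the Rodrigues formula $e^{-s}s^{\nu}p(s) = \frac{1}{j!}\frac{d^j}{ds^j}\bigl(e^{-s}s^{j+\nu}\bigr)$ and integrate by parts $j$ times. The boundary terms vanish for $\nu > -1$. This turns the integral into $\int_0^\infty e^{-s}s^{j+\nu}(s+a)^{-j-1}\,ds$. Substituting $s = at$ and using the integral representation $U(\alpha,\beta,a) = \Gamma(\alpha)^{-1}\int_0^\infty e^{-at}t^{\alpha-1}(1+t)^{\beta-\alpha-1}\,dt$ with $\alpha = j+\nu+1$, $\beta = \nu+1$, I expect
\begin{equation}
\int_{0}^{\infty} \frac{e^{-s} s^{\nu} p(s)}{s+a} \, ds = \Gamma(j+\nu+1) \, a^{\nu} \hypergeometricU{j+\nu+1}{\nu+1}{a}
\end{equation}
(the power of $a$ is to be checked carefully). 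For the first factor, \eqref{eq:Laguerre polynomials} together with Kummer's transformation gives
\begin{equation}
p(-a) = \frac{\Gamma(j+\nu+1)}{\Gamma(\nu+1)\Gamma(j+1)} \, e^{a} \hypergeometricpFq{1}{1}{j+\nu+1}{\nu+1}{a} .
\end{equation}

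Finally, set $\mu = 2j+\nu+1$, so that $\nu/2 - (-\mu/2) + 1/2 = j+\nu+1$ and $2(\nu/2)+1 = \nu+1$. By \eqref{eq:WhittakerM}--\eqref{eq:WhittakerW},
\begin{equation}
\WhittakerM{-\mu/2}{\nu/2}(a)\WhittakerW{-\mu/2}{\nu/2}(a) = e^{-a} a^{\nu+1} \hypergeometricpFq{1}{1}{j+\nu+1}{\nu+1}{a} \hypergeometricU{j+\nu+1}{\nu+1}{a} .
\end{equation}
Multiplying the pieces, the factors $e^{\pm a}$ cancel. The Gamma factors collapse to $\Gamma(j+\nu+1)/\Gamma(\nu+1) = \Gamma(\mu/2+\nu/2+1/2)/\Gamma(\nu+1)$, which should reproduce exactly \eqref{eq:products of Whittaker functions}, including the prefactor $1/a$.

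The main obstacle is bookkeeping rather than ideas. Specifically, I must verify the exact power of $a$ produced by the substitution in the $U$-integral, so that it matches $a^{\nu+1}/a$, and confirm that the boundary terms in the repeated integration by parts vanish for all $\nu \in \intervaloo{-1}{\infty}$. If the Rodrigues route causes trouble near $s=0$ for $-1<\nu<0$, the fallback is to take the Stieltjes transform of $e^{-s}s^\nu \LaguerreL{j}{\nu}(s)$ from a table (e.g.\ \cite{GR2014}) or to argue by analytic continuation in $\nu$.
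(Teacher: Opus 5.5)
Your proposal is correct in structure and follows the paper's proof closely. The same quotient-polynomial/orthogonality step reduces $\Tjnuf{j}{\nu}{\weightphia{a}}$ to $\frac{\Gamma(j+1)}{\Gamma(j+\nu+1)}\LaguerreL{j}{\nu}(-a)\int_0^\infty (a+s)^{-1}e^{-s}s^{\nu}\LaguerreL{j}{\nu}(s)\,ds$, and the same identification with $\WhittakerM{-\mu/2}{\nu/2}\WhittakerW{-\mu/2}{\nu/2}$ finishes the argument.

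The only real difference is how you evaluate the remaining Stieltjes transform. The paper quotes the table integral \cite[7.623.1]{GR2014}. You derive it from Rodrigues' formula, $j$ integrations by parts, and the Euler-type integral for $U$.

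Your derivation works as stated for every $\nu>-1$, so no fallback or analytic continuation is needed. For $0\le m\le j-1$, the lowest power in $\frac{d^m}{ds^m}\bigl(e^{-s}s^{j+\nu}\bigr)$ is $s^{j+\nu-m}$, whose exponent is at least $\nu+1>0$. Hence all boundary terms vanish at $s=0$, and they vanish at $\infty$ by exponential decay. The substitution $s=at$ gives exactly $\Gamma(j+\nu+1)\,a^{\nu}\,\hypergeometricU{j+\nu+1}{\nu+1}{a}$, so the power $a^{\nu}$ matches $a^{\nu+1}/a$ on the right. This route makes the proof self-contained at the cost of a few extra lines.

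One correction is needed. Kummer's transformation, equivalently the second form of \eqref{eq:Laguerre polynomials} evaluated at $r=-a$, gives
$\LaguerreL{j}{\nu}(-a)=\frac{\Gamma(j+\nu+1)}{\Gamma(\nu+1)\Gamma(j+1)}\,e^{-a}\,\hypergeometricpFq{1}{1}{j+\nu+1}{\nu+1}{a}$,
with $e^{-a}$, not $e^{a}$. With your sign, the two sides of \eqref{eq:explicit formula} would differ by a factor $e^{2a}$. With the correct sign, the $e^{-a}$ on the left coincides with the $e^{-a}$ coming from the product $\WhittakerM{-\mu/2}{\nu/2}(a)\WhittakerW{-\mu/2}{\nu/2}(a)$. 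So the exponentials match on both sides rather than cancel.
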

\begin{proof}[Proof of Proposition \ref{prop:explicit formula}]
Let $\Quotient{j-1}{\nu}(a; r)$ be the quotient polynomial obtained by dividing $\LaguerreL{j}{\nu}(r)$ by $a + r$, that is, 
\begin{equation} \label{eq:quotient}
\Quotient{j-1}{\nu}(a;r) = \frac{\LaguerreL{j}{\nu}(r) - \LaguerreL{j}{\nu}(-a)}{a+r} .
\end{equation}
Then the orthogonality of the Laguerre polynomials implies
\begin{equation} \label{eq:orthogonality of Q and L}
\frac{ \Gamma(j+1) }{\Gamma(j+\nu+1)} \int_{r \in \intervaloo{0}{\infty}} \exp(-r) r^\nu \LaguerreL{j}{\nu}(r) \Quotient{j-1}{\nu}(a; r) \, dr = 0 ,
\end{equation}
since $\Quotient{j-1}{\nu}$ is a polynomial of degree $j-1$. Thus, we have
\begin{align}
	&\quad\Tjnuf{j}{\nu}{\weightphia{a}}\\
	&= \frac{ \Gamma(j+1) }{\Gamma(j+\nu+1)} \int_{r \in \intervaloo{0}{\infty}} (a+r)^{-1} \exp(-r) r^\nu \LaguerreL{j}{\nu}(r)^2 \, dr \\
	\underset{\eqref{eq:quotient}}&=
	\frac{ \Gamma(j+1) }{\Gamma(j+\nu+1)} \int_{r \in \intervaloo{0}{\infty}} \exp(-r) r^\nu \LaguerreL{j}{\nu}(r) (\Quotient{j-1}{\nu}(a;r) + (a+r)^{-1} \LaguerreL{j}{\nu}(-a)) \, dr \\
	\underset{\eqref{eq:orthogonality of Q and L}}&= \frac{ \Gamma(j+1) }{\Gamma(j+\nu+1)} \LaguerreL{j}{\nu}(-a) \int_{r \in \intervaloo{0}{\infty}} (a+r)^{-1} \exp(-r) r^\nu \LaguerreL{j}{\nu}(r) \, dr .
	\label{eq:explicit formula step 1}
\end{align}
Now we use the hypergeometric representation \eqref{eq:Laguerre polynomials}
and the integral formula \cite[7.623.1]{GR2014}
\begin{gather} \label{eq:explicit formula step 2}
\int_{r \in \intervaloo{0}{\infty}} (a+r)^{-1} \exp(-r) r^{\nu} \hypergeometricpFq{1}{1}{-\mu}{\nu+1}{r} \, dr 
= \Gamma(\mu+1) \Gamma(\nu + 1) a^{\nu} \hypergeometricU{\mu + \nu + 1}{\nu + 1}{a} ,
\end{gather}
which is valid for every $\mu, \nu \in \intervaloo{-1}{\infty}$ and $a \in \intervaloo{0}{\infty}$.
Then we get
\begin{align}
	&\quad\Tjnuf{j}{\nu}{\weightphia{a}}\\
	\underset{\eqref{eq:Laguerre polynomials}, \eqref{eq:explicit formula step 1}}&=
	\frac{ \Gamma(j+\nu+1)}{\Gamma(j+1) \Gamma(\nu+1)^2} e^{-a} \hypergeometricpFq{1}{1}{j+\nu+1}{\nu+1}{a} \int_{r \in \intervaloo{0}{\infty}} (a+r)^{-1} \exp(-r) r^\nu \hypergeometricpFq{1}{1}{-j}{\nu+1}{r} \, dr \\
	\underset{\eqref{eq:explicit formula step 2}}&= \frac{ \Gamma(j+\nu+1)}{\Gamma(\nu+1)} e^{-a} a^{\nu} \hypergeometricpFq{1}{1}{j+\nu+1}{\nu+1}{a} \hypergeometricU{j + \nu + 1}{\nu + 1}{a} \\
	\underset{\eqref{eq:WhittakerM}, \eqref{eq:WhittakerW}}&= \frac{\Gamma(j+\nu+1)}{a \Gamma(\nu+1)} \WhittakerM{-j-\nu/2-1/2}{\nu/2}(a) \WhittakerW{-j-\nu/2-1/2}{\nu/2}(a), 
\end{align}
as desired.
\end{proof}
Next, we show some monotonicity results for $\Productmna{\mu}{\nu}{a}$ with respect to $\mu$ and $\nu$.
\begin{proposition} \label{prop:products of Whittaker functions}
	The following hold.
	\begin{enumerate}[label=\textup{(\roman*)}]
		\item \label{item:decreasing with respect to nu}
		For each $\mu \in \intervaloo{-1}{\infty}$ and $a \in \intervaloo{0}{\infty}$, 
		\begin{equation}
	\intervalco{0}{\infty} \ni \nu \longmapsto \Productmna{\mu}{\nu}{a}
\end{equation}	
is strictly decreasing.
		\item \label{item:decreasing with respect to mu}
		For each $\nu \in \intervalco{0}{\infty}$ and $a \in \intervaloo{0}{\infty}$, 
\begin{equation}
	\intervaloo{-\nu-1}{\infty} \ni \mu \longmapsto \Productmna{\mu}{\nu}{a}
\end{equation}	
is completely monotone.
		\item  \label{item:increasing with respect to mu}
		For each $\nu \in \intervalco{1/2}{\infty}$ and $a \in \intervaloo{0}{\infty}$,
		\begin{equation}
	\intervalco{0}{\infty} \ni \mu \longmapsto (2 \mu a)^{1/2} \Productmna{\mu}{\nu}{a}
		\end{equation}
		is strictly increasing and converges to $1$ as $\mu \to \infty$.
	\end{enumerate}
\end{proposition}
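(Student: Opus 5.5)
The plan is to prove all three statements from a single integral representation of $\Productmna{\mu}{\nu}{a}$. I will take the classical product formula for Whittaker functions (Buchholz; see \cite[6.669]{GR2014} or the corresponding DLMF entry):
\begin{equation}
\frac{\Gamma(1/2+m-\kappa)}{\Gamma(1+2m)} \WhittakerM{\kappa}{m}(a)\WhittakerW{\kappa}{m}(a) = a\int_0^\infty e^{-a\cosh t} \coth(t/2)^{2\kappa} \BesselI{2m}(a\sinh t)\,dt ,
\end{equation}
valid for $\operatorname{Re}(1/2+m-\kappa)>0$. With $\kappa=-\mu/2$ and $m=\nu/2$, this should give
\begin{equation}
\Productmna{\mu}{\nu}{a} = \int_0^\infty e^{-a\cosh t}\tanh(t/2)^{\mu}\BesselI{\nu}(a\sinh t)\,dt ,
\end{equation}
for $\mu>-\nu-1$. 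This range is exactly where the integral converges at $t=0$, since $\BesselI{\nu}(a\sinh t)\sim c\,t^\nu$ and $\tanh(t/2)^\mu\sim c\,t^\mu$. At infinity the integrand decays because $e^{-a\cosh t}\BesselI{\nu}(a\sinh t)\lesssim e^{-a e^{-t}}e^{-t/2}$.

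\textbf{(i) and (ii).} For (i), everything except $\BesselI{\nu}$ is positive and independent of $\nu$. It is a standard fact that $\nu\mapsto \BesselI{\nu}(x)$ is strictly decreasing on $[0,\infty)$ for each fixed $x>0$. Strict monotonicity of the integral then follows. For (ii), write $\tanh(t/2)^\mu=\exp(-\mu s)$ with $s=-\log\tanh(t/2)\in(0,\infty)$. Then $P$ is the Laplace transform in $\mu$ of the positive measure obtained by pushing forward $e^{-a\cosh t}\BesselI{\nu}(a\sinh t)\,dt$ to the $s$-variable. The definition \ref{item:completely monotone 2} then gives complete monotonicity, once the variable is shifted so that the domain begins at $-\nu-1$; differentiation under the integral is justified on compact subsets of $\mu>-\nu-1$.

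\textbf{(iii).} Carry out the substitution $s=-\log\tanh(t/2)$ explicitly. One checks that $\cosh t=\coth s$, $\sinh t=\csch s$, and $dt=-\sinh t\,ds$. Hence
\begin{equation}
\Productmna{\mu}{\nu}{a}=\int_0^\infty e^{-\mu s}\,s^{-1/2}h(s)\,ds,\qquad h(s)\coloneqq s^{1/2}\csch s\; e^{-a\coth s}\,\BesselI{\nu}(a\csch s).
\end{equation}
Scaling $v=\mu s$ gives
\begin{equation}
\mu^{1/2}\Productmna{\mu}{\nu}{a}=\int_0^\infty e^{-v}v^{-1/2}h(v/\mu)\,dv .
\end{equation}
So it suffices to show that $h$ is strictly decreasing on $(0,\infty)$ with $h(0+)=(2\pi a)^{-1/2}$. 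Monotone convergence then gives the limit $(2\pi a)^{-1/2}\Gamma(1/2)=(2a)^{-1/2}$, that is, $(2\mu a)^{1/2}\Productmna{\mu}{\nu}{a}\uparrow 1$.

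To see that $h$ decreases, set $x=a\csch s$ and factor
\begin{equation}
h(s)=\bigl[x^{1/2}e^{-x}\BesselI{\nu}(x)\bigr]\cdot a^{-1/2}\Bigl(\frac{s}{\sinh s}\Bigr)^{1/2}\cdot e^{-a(\coth s-\csch s)} .
\end{equation}
Here $\coth s-\csch s=\tanh(s/2)$ is increasing, so the exponential factor is decreasing. The factor $s/\sinh s$ is decreasing. Since $x$ decreases in $s$, it remains to know that $x\mapsto x^{1/2}e^{-x}\BesselI{\nu}(x)$ is increasing for $\nu\ge 1/2$. This is the main obstacle and the place where $\nu\ge1/2$ enters. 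For $\nu=1/2$ the function equals $(1-e^{-2x})/\sqrt{2\pi}$, which is increasing. For $\nu>1/2$ the asymptotics $1-(4\nu^2-1)/(8x)$ suggest it approaches $(2\pi)^{-1/2}$ from below.

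I would try to prove this monotonicity in one of two ways. The first is from a representation $e^{-x}\BesselI{\nu}(x)=\frac{(2x)^{\nu}}{\sqrt\pi\,\Gamma(\nu+1/2)}\int_0^{1}e^{-2xu}(u(1-u))^{\nu-1/2}\,du$ (after the substitution $u=(1-\cos\theta)/2$ in Poisson's integral). The second, as a fallback, is to cite the known result that $x^{1/2}e^{-x}\BesselI{\nu}(x)$ is increasing for $\nu\ge1/2$, which can be derived via the Turán-type/ratio bound $\BesselI{\nu}'/\BesselI{\nu}\ge 1-1/(2x)$.

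Finally, the limit $h(0+)$ follows from $x\to\infty$, $\sqrt{2\pi x}\,e^{-x}\BesselI{\nu}(x)\to1$, $s/\sinh s\to1$ and $\tanh(s/2)\to0$. Strictness comes from the strict decrease of $s/\sinh s$.
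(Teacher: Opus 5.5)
Your proposal is correct, and its overall structure matches the paper's proof. You use the same Whittaker product formula at equal arguments. The tabulated version \cite[6.669.4]{GR2014} assumes $\nu>0$ and distinct arguments, so the case $\nu=0$, $a=b$ needs the limiting argument the paper mentions in a footnote. You also use the same substitution $\tanh(t/2)=e^{-s}$; your $h$ is exactly $(2\pi a)^{-1/2}F_{\nu,a}$. Your arguments for (i) and (ii) are the paper's, and you make the same reduction of (iii) to $h$ being strictly decreasing with $h(0+)=(2\pi a)^{-1/2}$.

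You differ only in the final Bessel input. The paper factors $F_{\nu,a}(s)=2(s\csch s)^{1/2}e^{-a\coth s}\sinh(a\csch s)\cdot I_\nu(x)/I_{1/2}(x)$ with $x=a\csch s$. It settles $\nu=1/2$ by hand and, for $\nu>1/2$, cites the monotonicity of $I_\nu/I_{1/2}$ (Lorch; Hartman--Watson). You move the factor $1-e^{-2x}$ into the Bessel term, so you need only the weaker fact that $g(x)=x^{1/2}e^{-x}I_\nu(x)$ is non-decreasing; strictness already comes from $s/\sinh s$.

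Your first suggested route proves this at once. You should write it out rather than leave it as a plan, since it is the one non-routine step. Putting $v=2xu$ in your Poisson-type formula gives
\[
g(x)=\frac{1}{\sqrt{2\pi}\,\Gamma(\nu+1/2)}\int_0^{2x}e^{-v}v^{\nu-1/2}\Bigl(1-\frac{v}{2x}\Bigr)^{\nu-1/2}\,dv .
\]
Both the integrand and the range of integration increase with $x$ precisely because $\nu-1/2\geq 0$. Monotone convergence then gives $g(x)\to(2\pi)^{-1/2}$ as $x\to\infty$, and for $\nu=1/2$ the formula reduces to $(1-e^{-2x})/\sqrt{2\pi}$.

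Your version is therefore self-contained, treats all $\nu\geq 1/2$ uniformly, and shows where the hypothesis $\nu\geq 1/2$ enters. The paper's is shorter on the page, at the price of importing a ratio result stronger than what is needed.
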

\begin{proof}[Proof of Proposition \ref{prop:products of Whittaker functions}]
We begin with the following integral representation (\cite[6.669.4]{GR2014}):
\begin{align} \label{eq:products of Whittaker functions integral form}
&\quad \frac{\Gamma(\mu/2 + \nu/2 + 1/2)}{(ab)^{1/2} \Gamma(\nu+1)} \WhittakerW{-\mu/2}{\nu/2}(a) \WhittakerM{-\mu/2}{\nu/2}(b) \\
&= \int_{t \in \intervaloo{0}{\infty}} \exp\mleft(- \frac{a+b}{2} \cosh{t}\mright) \BesselI{\nu}((ab)^{1/2} \sinh{t}) ( \tanh(t/2) )^{\mu} \, dt ,
\end{align}
which is valid whenever\footnote{To be precise, the formula in \citet[6.669.4]{GR2014} is stated for $\mu + \nu + 1 > 0$, $\nu > 0$, $a > b > 0$. 
Nevertheless, we can easily extend it to $\nu = 0$ and $a = b > 0$ by a standard limiting argument.}
\begin{equation}
\mu + \nu + 1 > 0 , \quad \nu \geq 0 , \quad a \geq b > 0 .
\end{equation}
Here, $\BesselI{\nu}$ denotes the modified Bessel function of the first kind of order $\nu$, which is strictly positive on $\intervaloo{0}{\infty}$.
Letting $a = b$ and using the substitution
\begin{equation}
	\tanh(t/2) = \exp(-s) ,
\end{equation}
we get
\begin{equation}
	\Productmna{\mu}{\nu}{a}
	= (2 \pi a)^{-1/2} \int_{s \in \intervaloo{0}{\infty}} s^{-1/2} \Fnas{\nu}{a}{s} \exp(-\mu s)  \, ds ,
\end{equation}
where
\begin{equation}
	\Fnas{\nu}{a}{s} \coloneqq (2 \pi a s)^{1/2} \exp(- a \coth{s}) \BesselI{\nu}(a \csch{s}) \csch{s} .
\end{equation}
The property \ref{item:decreasing with respect to mu} is immediate from this expression.
Moreover, \ref{item:decreasing with respect to nu} follows from the fact that
\begin{equation}
\intervalco{0}{\infty} \ni \nu \longmapsto \BesselI{\nu}(r)
\end{equation}
is strictly decreasing for each $r \in \intervaloo{0}{\infty}$ (see \citet{Coc1967, Reu1968, Jon1968}).
In order to prove \ref{item:increasing with respect to mu}, we write
\begin{align} \label{eq:products of Whittaker functions integral form 2}
	(2 \mu a)^{1/2} \Productmna{\mu}{\nu}{a}
	&= \pi^{-1/2} \mu^{1/2} \int_{s \in \intervaloo{0}{\infty}} s^{-1/2} \Fnas{\nu}{a}{s} \exp(-\mu s)  \, ds \\
	&= \pi^{-1/2} \int_{s \in \intervaloo{0}{\infty}} s^{-1/2} \Fnas{\nu}{a}{s / \mu} \exp(-s)  \, ds .
\end{align}
Since we know that
\begin{equation}
\int_{s \in \intervaloo{0}{\infty}} s^{-1/2} \exp(-s)  \, ds = \Gamma(1/2) = \pi^{1/2},
\end{equation}
it suffices to show that the function
\begin{equation}
\intervaloo{0}{\infty} \ni s \longmapsto \Fnas{\nu}{a}{s}
\end{equation}
is strictly decreasing and satisfies
\begin{equation}
	\lim_{s \downarrow 0} \Fnas{\nu}{a}{s} = 1
\end{equation}
for each $\nu \in \intervalco{1/2}{\infty}$ and $a \in \intervaloo{0}{\infty}$.
To see this, recall that
\begin{equation}
	\BesselI{1/2}(x) = \mleft( \frac{2}{\pi x} \mright)^{1/2} \sinh{x} ,
\end{equation}
so that we have
\begin{equation}
\Fnas{\nu}{a}{s} = 2 (s \csch{s})^{1/2} \exp(- a \coth{s}) \sinh(a \csch{s}) \frac{\BesselI{\nu}(a \csch{s})}{\BesselI{1/2}(a \csch{s})} .
\end{equation}
It is easy to see that the functions
\begin{equation}
\intervaloo{0}{\infty} \ni s \longmapsto s \csch{s} = \frac{2s}{e^{s} - e^{-s}}
\end{equation}
and
\begin{align}
\intervaloo{0}{\infty} \ni s \longmapsto 2 \exp(- a \coth{s}) \sinh(a \csch{s})
	&= \exp( - a (\coth{s} - \csch{s}) ) - \exp( - a (\coth{s} + \csch{s}) ) \\
	&= \exp( - a \tanh{(s/2)}  ) - \exp( - a \coth{(s/2)} )
\end{align}
are strictly decreasing and converge to $1$ as $s \downarrow 0$.
This proves the desired result for the case $\nu = 1/2$. 
When $\nu \in \intervaloo{1/2}{\infty}$, we use the fact that
\begin{equation}
x \longmapsto \frac{\BesselI{\nu_1}(x)}{\BesselI{\nu_2}(x)}
\end{equation}
is strictly increasing on $\intervaloo{0}{\infty}$ and converges to $1$ as $x \to \infty$ whenever $\nu_1 > \nu_2 \geq 0$; see \citet[Theorem 1]{Lor1967} or \citet[Proposition 7.1]{HW1974}.
Thus, the function
\begin{equation}
\intervaloo{0}{\infty} \ni s \longmapsto \frac{\BesselI{\nu}(a \csch{s})}{\BesselI{1/2}(a \csch{s})}
\end{equation}
is strictly decreasing and converges to $1$ as $s \downarrow 0$ when $\nu \in \intervaloo{1/2}{\infty}$.
This completes the proof.
\end{proof}
Now we prove Theorem \ref{thm:monotonicity for Stieltjes weights} using Propositions \ref{prop:explicit formula} and \ref{prop:products of Whittaker functions}.
\begin{proof}[Proof of Theorem \ref{thm:monotonicity for Stieltjes weights}]
Let $f \in L^1(\intervaloo{0}{\infty}) \setminus \{0\}$ be such that 
\begin{equation}
	r \longmapsto f(r^{1/2})
\end{equation}
is a Stieltjes function. 
Then, by the definition of Stieltjes functions \ref{item:Stieltjes 1} and the assumption $f \in L^1(\intervaloo{0}{\infty}) \setminus \{0\}$, 
there exists a non-zero Borel measure $\lambda$ on $\intervaloo{0}{\infty}$ such that
\begin{equation} \label{eq:integral representation of weight}
	f(r) = \int_{a \in \intervaloo{0}{\infty}} \weightphia{a}(r) \, d\lambda(a)
\end{equation}
holds for every $r \in \intervaloo{0}{\infty}$.
Therefore, for each $\nu \in \intervaloo{-1}{\infty}$ and $j \in \N$, we have
\begin{equation} \label{eq:formula for Tjnu}
\Tjnuf{j}{\nu}{f} 
\underset{\eqref{eq:integral representation of weight}}{=} \int_{a \in \intervaloo{0}{\infty}} \Tjnuf{j}{\nu}{\weightphia{a}} \, d\lambda(a) 
\underset{\text{Proposition \ref{prop:explicit formula}}}{=} \int_{a \in \intervaloo{0}{\infty}} \Productmna{2j + \nu + 1}{\nu}{a} \, d\lambda(a)
\end{equation}
and 
\begin{equation} \label{eq:formula for L1 norm}
\norm{f}_{L^1(\intervaloo{0}{\infty})} = \int_{r \in \intervaloo{0}{\infty}} f(r) \, dr \underset{\eqref{eq:integral representation of weight}}{=} \int_{a \in \intervaloo{0}{\infty}} \int_{r \in \intervaloo{0}{\infty}} \weightphia{a}(r) \, dr \, d\lambda(a) = \frac{\pi}{2} \int_{a \in \intervaloo{0}{\infty}} a^{-1/2} \, d\lambda(a) .
\end{equation}
On the other hand, by Proposition \ref{prop:products of Whittaker functions}, 
\begin{equation}
\N \ni j \longmapsto (4j+2\nu+2)^{1/2} \Productmna{2j + \nu + 1}{\nu}{a}
\end{equation}
is strictly increasing and converges to $a^{-1/2}$ as $j \to \infty$ for each $\nu \in \intervalco{1/2}{\infty}$ and $a \in \intervaloo{0}{\infty}$.
Hence, we conclude that 
\begin{equation}
j \longmapsto (4j+2\nu+2)^{1/2} \Tjnuf{j}{\nu}{f} = \int_{a \in \intervaloo{0}{\infty}} (4j+2\nu+2)^{1/2} \Productmna{2j + \nu + 1}{\nu}{a} \, d\lambda(a)
\end{equation}
is strictly increasing and converges to $\frac{2}{\pi} \norm{f}_{L^1(\intervaloo{0}{\infty})}$ as $j \to \infty$ for each $\nu \in \intervalco{1/2}{\infty}$.
\end{proof}
\section{Remarks}
\subsection{Smoothing estimates on the radial \texorpdfstring{$L^2$}{L2} space and its orthogonal complement} \label{section:radial initial data}
Let $\Lrad(\R^d)$ be the subspace of $L^2(\R^d)$ consisting of all radially symmetric functions, and $\Lperp(\R^d)$ be its orthogonal complement.
Then we have the following.
\begin{proposition} \label{prop:optimal constant radial}
Let $d \geq 2$.
We write $\hoConstwpdrad{w}{\psi}{d}$ and $\hoconstwpdrad{w}{\psi}{d}$ for the optimal constants of the inequalities \eqref{eq:smoothing estimate general} and \eqref{eq:smoothing estimate general reverse} with the initial data restricted to $\Lrad(\R^d)$, and $\hoConstwpdperp{w}{\psi}{d}$ and $\hoconstwpdperp{w}{\psi}{d}$ for those restricted to $\Lperp(\R^d)$, respectively.
Then we have
		\begin{alignat}{2}
	\hoConstwpdrad{w}{\psi}{d} &={}& 2\pi \sup_{j \in \N} {}&\psi(4j+d, (d/2-1)^2)^2 \Tjnuf{j}{d/2-1}{w} , 
	\label{eq:ho optimal constant C radial} \\
	\hoconstwpdrad{w}{\psi}{d} &={}&2\pi \inf_{j \in \N} {}&\psi(4j+d, (d/2-1)^2)^2 \Tjnuf{j}{d/2-1}{w} ,
	\label{eq:ho optimal constant c radial}
\end{alignat}
and
	\begin{alignat}{2}
	\hoConstwpdperp{w}{\psi}{d} &={}& 2\pi \sup_{j \in \N} \sup_{k \in \Z_{\geq 1}} {}&\psi(4j+2k+d, (k + d/2-1)^2)^2 \Tjnuf{j}{k+d/2-1}{w} , 
	\label{eq:ho optimal constant C perp} \\
	\hoconstwpdperp{w}{\psi}{d} &={}&2\pi \inf_{j \in \N} \inf_{k \in \Z_{\geq 1}} {}&\psi(4j+2k+d, (k + d/2-1)^2)^2 \Tjnuf{j}{k+d/2-1}{w} .
	\label{eq:ho optimal constant c perp}
\end{alignat}
\end{proposition}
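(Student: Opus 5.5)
The plan is to reuse the identity \eqref{eq:LHS} from the proof of Theorem \ref{thm:optimal constant}, which holds for every $u_0 \in L^2(\R^d)$:
\begin{equation}
\int_{(x, t) \in \R^d \times \intervalcc{0}{2\pi}} w(\abs{x}) \abs{ \psi(\Hamiltonian, - \LambdaS) e^{- i t \Hamiltonian} u_0(x) }^2 \, dx \, dt = 2\pi \sumjkn \psi(4j+2k+d, (k+d/2-1)^2)^2 \Tjnuf{j}{k+d/2 - 1}{w} \abs{\fjkn{u_0}{j}{k}{n}}^2 .
\end{equation}
What remains is to identify which coefficients $\fjkn{u_0}{j}{k}{n}$ can be nonzero when $u_0$ lies in $\Lrad(\R^d)$ or in $\Lperp(\R^d)$. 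After that, the statement reduces to the elementary fact that a diagonal quadratic form $\sum_\alpha m_\alpha \abs{c_\alpha}^2$ on $\ell^2$ of an index set $A$ has supremum and infimum over the unit sphere equal to $\sup_{\alpha\in A} m_\alpha$ and $\inf_{\alpha\in A} m_\alpha$.

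The key step is to characterize both subspaces in terms of the coefficients. By Section \ref{section:spherical Laplacian}, $\HPk{0}(\S^{d-1})$ consists of the constants, so we may take $\Ykn{0}{1} = \abs{\S^{d-1}}^{-1/2}$. The spherical harmonics of degree $k\ge1$ are orthogonal to constants in $L^2(\S^{d-1})$. For radial $f(r\theta)=g(r)$, the inner integral $\int_{\S^{d-1}} f(r\theta)\conjugate{\Ykn{k}{n}(\theta)}\,d\sigma$ therefore vanishes for $k\ge1$, and hence $\fjkn{f}{j}{k}{n}=0$ whenever $k\geq 1$. Conversely, by the expansion in Proposition \ref{prop:decomposition}, any $f$ with $\fjkn{f}{j}{k}{n}=0$ for all $k\ge1$ is $r^{-(d-1)/2}\sum_j \fjkn{f}{j}{0}{1}\LaguerrecalLjnu{j}{d/2-1}(r)\Ykn{0}{1}$, which is radial. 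Since $f\mapsto\widehat f$ is unitary onto $\ell^2$, $\Lrad(\R^d)$ is exactly the closed span of the basis vectors with $k=0$. Its orthogonal complement $\Lperp(\R^d)$ is then exactly the closed span of those with $k\ge1$, that is, the $u_0$ with $\fjkn{u_0}{j}{0}{1}=0$ for all $j$.

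Substituting into the identity gives the result. For $u_0\in\Lrad(\R^d)$ the right-hand side equals $2\pi\sum_j \psi(4j+d,(d/2-1)^2)^2\Tjnuf{j}{d/2-1}{w}\abs{\fjkn{u_0}{j}{0}{1}}^2$ and $\norm{u_0}^2=\sum_j\abs{\fjkn{u_0}{j}{0}{1}}^2$. This yields the bounds with the constants \eqref{eq:ho optimal constant C radial} and \eqref{eq:ho optimal constant c radial}. Testing with single basis elements $u_0$ having $\widehat{u_0}=\delta_{(j,0,1)}$, which are radial, shows the sup and inf are sharp. The case $\Lperp(\R^d)$ is identical, with $k$ ranging over $\Z_{\geq1}$ and test functions $\delta_{(j,k,n)}$ for $k\ge1$.

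No step is genuinely hard. The only points needing care are the converse inclusion, namely that $k=0$ coefficients alone produce a radial function, and the convention for $\sup$ and $\inf$ when values are infinite. As in Theorem \ref{thm:optimal constant}, we interpret the optimal constants in $\intervalcc{0}{\infty}$, so the same argument goes through.
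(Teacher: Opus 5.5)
Your proposal is correct and follows the paper's own argument: the identity \eqref{eq:LHS} combined with the characterization of $\Lrad(\R^d)$ and $\Lperp(\R^d)$ by the vanishing of $\fjkn{f}{j}{k}{n}$ for $k \geq 1$ and for $k = 0$, respectively. You also write out the details the paper leaves implicit, namely the orthogonality of higher-degree spherical harmonics to constants, the converse inclusion, and sharpness via single basis elements.
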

This is an immediate consequence of the identity \eqref{eq:LHS} in the proof of Theorem \ref{thm:optimal constant} and the equivalence
\begin{equation}
f \in \mleft\{ \begin{matrix*}[r]
	\Lrad(\R^d) \\
	\Lperp(\R^d)
\end{matrix*} \mright. \iff \fjkn{f}{j}{k}{n} = 0 \text{ whenever } 
\mleft\{\begin{matrix*}[l]
k \geq 1 ,\\
k = 0,
\end{matrix*}
\mright.
\end{equation}
which is true for every $f \in L^2(\R^d)$.
Comparing Theorem \ref{thm:optimal constant} and Proposition \ref{prop:optimal constant radial}, we obtain
\begin{alignat}{4}
	\hoConstwpd{w}{\psi}{d} &= \max \{& \hoConstwpdrad{w}{\psi}{d} , {}& &\hoConstwpd{w}{\psi}{d+2} & \} = \sup_{k \in \N} {}&\hoConstwpdrad{w}{\psi}{d+2k} , 
	\label{eq:ho optimal constant C identity rad} \\
	\hoconstwpd{w}{\psi}{d} &= \min \{& \hoconstwpdrad{w}{\psi}{d} , {}& &\hoconstwpd{w}{\psi}{d+2} & \} = \inf_{k \in \N} {}&\hoconstwpdrad{w}{\psi}{d+2k} ,
	\label{eq:ho optimal constant c identity rad}
\end{alignat}
and
\begin{align}
\hoConstwpdperp{w}{\psi}{d} &= \hoConstwpd{w}{\psi}{d+2} , 
\label{eq:ho optimal constant C identity perp} \\
\hoconstwpdperp{w}{\psi}{d} &= \hoconstwpd{w}{\psi}{d+2} .
\label{eq:ho optimal constant c identity perp}
\end{align}
Using \eqref{eq:ho optimal constant C identity perp} and \eqref{eq:ho optimal constant c identity perp}, we get the following variants of Theorems \ref{main thm:type B HO} and \ref{main thm:type B identity HO} for the two-dimensional case. 
\begin{proposition}
The inequality
\begin{equation} \label{eq:type B HO 2D}
			\int_{(x, t) \in \R^2 \times \intervalcc{0}{2\pi}} \abs{x}^{-2} \abs{  e^{- i t \Hamiltonian} u_0(x) }^2 \, dx \, dt \leq 2\pi \norm{ u_0 }_{L^2(\R^2)}^2 
\end{equation}
holds for every $u_0 \in \Lperp(\R^2)$, where the constant $2\pi$ is optimal.
\end{proposition}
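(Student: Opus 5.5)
The plan is to reduce the two-dimensional restricted problem to the four-dimensional one through the identity \eqref{eq:ho optimal constant C identity perp}, and then read off the value from the computation in the proof of Theorems \ref{main thm:type B HO} and \ref{main thm:type B identity HO}. Take $w(r)=r^{-2}$ and $\psi\equiv 1$. Proposition \ref{prop:optimal constant radial} with $d=2$ gives
\begin{equation}
\hoConstwpdperp{w}{\psi}{2} = 2\pi \sup_{j\in\N}\sup_{k\in\Z_{\geq 1}} \Tjnuf{j}{k}{w},
\end{equation}
because $k+d/2-1=k$ when $d=2$. Equivalently, \eqref{eq:ho optimal constant C identity perp} says this equals $\hoConstwpd{w}{\psi}{4}$, and Theorem \ref{main thm:type B HO} with $d=4$ then yields $4\pi/(4-2)=2\pi$. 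This is the whole proof once the ingredients are checked.

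To be self-contained, I would instead use the formula $\Tjnuf{j}{\nu}{w}=1/\nu$, valid for every $\nu\in\intervaloo{0}{\infty}$. It was established in Section \ref{section:proof type B} via \cite[2.19.14.17]{PBM1988}. Since $k\geq 1$, we have $\nu=k>0$, so the formula applies and the supremum becomes $2\pi\sup_{k\geq1}1/k=2\pi$, attained exactly at $k=1$.

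It remains to justify the two conclusions of the proposition. The inequality \eqref{eq:type B HO 2D} with constant $2\pi$ follows from the identity \eqref{eq:LHS}: for $u_0\in\Lperp(\R^2)$ every coefficient with $k=0$ vanishes, so the left side equals $2\pi\sum \abs{\fjkn{u_0}{j}{k}{n}}^2/k\leq 2\pi\norm{u_0}^2_{L^2(\R^2)}$. Optimality follows by choosing $u_0$ to be a single basis element with $k=1$, for instance $j=0$ and $n=1$. This function lies in $\Lperp(\R^2)$ and gives equality.

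The only point needing care is the applicability of the $1/\nu$ formula: it requires $\nu>0$, whereas $\nu=0$ would make the integral diverge. The restriction to $\Lperp$ excludes exactly the index $k=0$, which is where $\nu=0$ would occur, so no real obstacle is expected.
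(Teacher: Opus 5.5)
Your proposal is correct and follows the paper's own route: the paper derives this proposition directly from the identity \eqref{eq:ho optimal constant C identity perp}, $\hoConstwpdperp{w}{\psi}{2}=\hoConstwpd{w}{\psi}{4}$, combined with Theorem \ref{main thm:type B HO} for $d=4$, which gives $4\pi/2=2\pi$. Your self-contained variant, using $\Tjnuf{j}{k}{w}=1/k$ for $k\geq 1$ together with the identity \eqref{eq:LHS}, is that same argument written out explicitly, and you correctly note that restricting to $\Lperp(\R^2)$ removes exactly the divergent $\nu=0$ terms.
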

\begin{proposition}
	The identity
		\begin{equation} \label{eq:type B identity HO 2D}
	\int_{(x, t) \in \R^2 \times \intervalcc{0}{2\pi}} \abs{x}^{-2} \abs{ (-\Laplacian_{\S^{1}})^{1/4} e^{- i t \Hamiltonian} u_0(x) }^2 \, dx \, dt = 2\pi \norm{u_0}_{L^2(\R^2)}^2
\end{equation}
holds for every $u_0 \in \Lperp(\R^2)$.
\end{proposition}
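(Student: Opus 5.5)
The plan is to apply the identity \eqref{eq:LHS} from the proof of Theorem \ref{thm:optimal constant} directly, with $d=2$, $w(r)=r^{-2}$ and $\psi(\lambda,\mu)=\mu^{1/4}$. This avoids going only through the sup/inf formulas \eqref{eq:ho optimal constant C perp} and \eqref{eq:ho optimal constant c perp}, which would merely give equality of the two optimal constants; an identity for every $u_0$ is slightly stronger. When $d=2$ we have $(d/2-1)^2=0$, so $-\LambdaS=-\Laplacian_{\S^1}$. The operator $(-\Laplacian_{\S^1})^{1/4}$ is therefore exactly $\psi(\Hamiltonian,-\LambdaS)$ with
\begin{equation}
\psi(4j+2k+2,k^2)^2=(k^2)^{1/2}=k .
\end{equation}

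First, for $u_0\in\Lperp(\R^2)$ we have $\fjkn{u_0}{j}{0}{n}=0$ for all $j,n$, by the equivalence stated after Proposition \ref{prop:optimal constant radial}. So the $k=0$ terms are absent from \eqref{eq:LHS}. This matters: $\Tjnuf{j}{0}{r^{-2}}$ diverges, since $\LaguerrecalLjnu{j}{0}(r)^2\sim c\,r$ near $0$. Restricting to $\Lperp$ is precisely what avoids a $0\cdot\infty$ ambiguity. The derivation of \eqref{eq:LHS} uses only Parseval in $t$ and in $\theta$ together with Tonelli, all on nonnegative quantities. Hence it remains valid as an identity in $[0,\infty]$ and gives
\begin{equation}
\int_{\R^2\times\intervalcc{0}{2\pi}} \abs{x}^{-2}\abs{(-\Laplacian_{\S^1})^{1/4}e^{-it\Hamiltonian}u_0(x)}^2\,dx\,dt = 2\pi\sum_{j\in\N}\sum_{k\geq1}\sum_{n} k\,\Tjnuf{j}{k}{r^{-2}}\,\abs{\fjkn{u_0}{j}{k}{n}}^2 .
\end{equation}

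Second, I invoke the computation from the proof of Theorems \ref{main thm:type B HO} and \ref{main thm:type B identity HO}, namely $\Tjnuf{j}{\nu}{r^{-2}}=1/\nu$ for every $\nu\in\intervaloo{0}{\infty}$. Here $\nu=k\geq1$, so this applies and $k\,\Tjnuf{j}{k}{r^{-2}}=1$ for every $j\in\N$ and every $k\geq1$. The right-hand side above then collapses to $2\pi\sum_{j}\sum_{k\geq1}\sum_n\abs{\fjkn{u_0}{j}{k}{n}}^2$.

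Finally, by Proposition \ref{prop:decomposition}, and again using that the $k=0$ coefficients of $u_0$ vanish, this sum equals $2\pi\norm{u_0}_{L^2(\R^2)}^2$. Equivalently, one could note that \eqref{eq:ho optimal constant C perp} and \eqref{eq:ho optimal constant c perp} both equal $2\pi$, or invoke \eqref{eq:ho optimal constant C identity perp} and \eqref{eq:ho optimal constant c identity perp} together with Theorem \ref{main thm:type B identity HO} at $d=4$. There is no real obstacle. The only point needing care is that the $k=0$ sector must be excluded from the outset, before any sup or inf over $k$ is taken, so that the divergent factor $\Tjnuf{j}{0}{r^{-2}}$ never appears.
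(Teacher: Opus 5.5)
Your proof is correct and is essentially the paper's argument. The paper gets the statement from \eqref{eq:ho optimal constant C identity perp}--\eqref{eq:ho optimal constant c identity perp}, which are immediate from \eqref{eq:LHS}, together with Theorem \ref{main thm:type B identity HO} at $d=4$, whereas you unwind this directly in $d=2$ using $\Tjnuf{j}{k}{r^{-2}}=1/k$ for $k\geq 1$. One correction to your framing: the sup/inf route is not weaker, because once the optimal upper and lower constants both equal $2\pi$, the identity holds for every $u_0\in\Lperp(\R^2)$.
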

We remark that a similar argument also works for free particles by using \citet[Theorem 5.4]{BS2014}.
In particular, we can show that the inequality \eqref{eq:type B free} and identity \eqref{eq:type B identity free} hold for every $u_0 \in \Lperp(\R^2)$ when $d=2$.
\subsection{Theorem \ref{main thm:type C HO general} fails in the two-dimensional case} \label{section:fails in 2D}
Let $d=2$ and $w(r) = \weightphia{a}(r) \coloneqq (a + r^2)^{-1}$.
In this case, for each $a \in \intervaloo{0}{\infty}$, the optimal constant of the inequality \eqref{eq:type C HO general} satisfies
\begin{align}
	\frac{\Const}{4 \norm{\weightphia{a}}_{L^1(\intervaloo{0}{\infty})}} 
	&\geq \frac{2 \pi (4j+2)^{1/2} \Tjnuf{j}{0}{\weightphia{a}}}{2 \pi / a^{1/2}} \\
	&\underset{\eqref{eq:explicit formula}}{=} (2 (2j+1) a)^{1/2} \int_{r \in \intervaloo{0}{\infty}} (a+r)^{-1} \exp(-r) \LaguerreL{j}{0}(r)^2 \, dr
\end{align}
for every $j \in \N$.
Now let $(a, j) = (1/6, 1)$. Then, by validated numerical integration, we obtain
\begin{equation}
	\frac{\Const}{4 \norm{\weightphia{a}}_{L^1(\intervaloo{0}{\infty})}} \geq \int_{r \in \intervaloo{0}{\infty}} (1/6+r)^{-1} \exp(-r) (r-1)^2 \, dr \geq 1.04350 .
\end{equation}
This lower bound can be verified by running the following code using
\texttt{kv} \cite{kv}.
% (lstinputlisting) lower_bound.cc
\begin{lstlisting}[language=c++]
#include <kv/defint.hpp>
typedef kv::interval<double> itv;
struct Func {
    template <class T> T operator() (const T& x) {
        T y;
        y = (((6)*(exp(-(x))))*(pow((x)-(1),2)))/((1)+((6)*(x)));        
        return y;
    }
};
int main()
{
    std::cout.precision(17);
    std::cout << kv::defint_autostep(Func(), itv(0), itv(16), 6) << std::endl;
}
\end{lstlisting}
\subsection{Theorem \ref{main thm:type C HO general} fails for completely monotone functions} \label{section:fails for cm}
The assumption on $w$ cannot be relaxed to the complete monotonicity of $r \longmapsto w(r^{1/2})$.
To see this, let $w(r) = \gaussiana{a}(r) \coloneqq \exp(-a r^2)$.
In this case, we have
\begin{equation}
	\Tjnuf{0}{\nu}{\gaussiana{a}}
	= \frac{1}{\Gamma(\nu+1)} \int_{r \in \intervaloo{0}{\infty}} \exp(- (a+1) r) r^{\nu} \, dr
= (1+a)^{-(\nu+1)}
\end{equation}
for each $\nu \in \intervaloo{-1}{\infty}$ and $a \in \intervaloo{0}{\infty}$.
Therefore, for each $d \geq 2$ and $a \in \intervaloo{0}{\infty}$, the optimal constant $\Const$ of the inequality \eqref{eq:type C HO general} satisfies
\begin{equation}
\frac{\Const}{4 \norm{\gaussiana{a}}_{L^1(\intervaloo{0}{\infty})}} \geq \frac{2 \pi d^{1/2} \Tjnuf{0}{d/2 - 1}{\gaussiana{a}}}{2 (\pi / a)^{1/2}} = (\pi a d)^{1/2} (1+a)^{-d/2} .
\end{equation}
Now let $a = 1/(d-1)$. Then we get
\begin{equation}
\frac{\Const}{4 \norm{\gaussiana{a}}_{L^1(\intervaloo{0}{\infty})}} \geq \pi^{1/2} \mleft( 1 + \frac{1}{d-1} \mright)^{-(d-1)/2} > (\pi / e)^{1/2} \approx 1.07505 .
\end{equation}

\end{document}